\theoremstyle{plain}
\newtheorem{theorem}{Theorem}[section]
\newtheorem{prop}[theorem]{Proposition}
\newtheorem{lemma}{Lemma}[section]
\theoremstyle{definition}
\newtheorem{definition}{Definition}[section]
\theoremstyle{remark}
\newtheorem{example}{Example}
\numberwithin{equation}{section}
\begin{document}
\title[A generalization of cyclic amenability of Banach algebras]%
{A generalization of cyclic amenability of Banach algebras}
\author[Behrouz Shojaee \and Abasalt Bodaghi]%
{Behrouz Shojaee* \and Abasalt Bodaghi**}

\newcommand{\acr}{\newline\indent}

\address{\llap{*\,}Department of Mathematics\acr
                   Karaj Branch\acr
                   Islamic Azad University\acr
                   Karaj, IRAN}
\email{shoujaei@kiau.ac.ir}

\address{\llap{**\,}Department of Mathematics\acr
                   Garmsar Branch\acr
                   Islamic Azad University\acr
                   Garmsar, IRAN}
\email{abasalt.bodaghi@gmail.com}
\thanks{}

\subjclass[2010]{Primary 46H20, 46H25; Secondary 46H35.}
\keywords{Approximately inner, cyclic amenability,  derivation}

\begin{abstract}
This paper continues the investigation of Esslamzadeh and the
first author which was begun in \cite{ess}. It is shown that 
homomorphic image of an approximately cyclic amenable Banach
algebra is again approximately cyclic amenable. Equivalence
of approximate cyclic amenability of a Banach algebra
$\mathcal{A}$ and approximate cyclic amenability of
$M_{n}(\mathcal{A})$ is proved. It is shown that under certain
conditions the approximate cyclic amenability of second dual
$\mathcal{A}^{**}$ implies the approximate cyclic amenability of
$\mathcal{A}$.
\end{abstract}

\maketitle

\section{Introduction}
The concept of an amenable Banach algebra was defined and studied for the first time by Johnson in \cite{joh}.
Since then, several modifications of this notion have been
introduced by different authors (for instance,
\cite{bem} and \cite{gha2}) . The concept of cyclic amenability
was presented by Gronbeak in \cite{gro}. He investigated the
hereditary properties of this concept, found some relations
between cyclic amenability of a Banach algebra and the
trace extension property of its ideals. The notion of
approximate amenability was introduced by Ghahramani and Loy
\cite{gha2} for Banach algebras were they characterized the structure
of approximately amenable Banach algebras in several ways. They also
gave some examples of approximately amenable, non-amenable Banach
algebras to show that two notions of approximate amenability and
Johnson's amenability do not coincide (for more information and
examples refer also to \cite{gha3}).

 In this paper we define
the concept of approximate cyclic amenability for Banach algebras
and investigate the hereditary properties for this new notion.
Furthermore, we show that for Banach algebras $\mathcal{A}$ and
$\mathcal{B}$, if direct sum $\mathcal{A}\oplus \mathcal{B}$ with
$\ell^{1}$-norm is approximately cyclic amenable, then so are
$\mathcal{A}$ and $\mathcal{B}$. By the means of an example
we show that the converse is not true. However, the converse can
be held if $\mathcal{A}^2$ is dense in $\mathcal{A}$. We also
portray that approximate cyclic amenability of a Banach algebra
$\mathcal{A}$ is equivalent to the approximate cyclic amenability
of $M_{n}(\mathcal{A})$. In the third section, we show that
homomorphic image of an approximately cyclic amenable Banach
algebra under a continuous homomorphism is also approximately
cyclic amenable. As a consequence, we prove that if the tensor
product $\mathcal{A}\widehat{\otimes}\mathcal{B}$ is
approximately cyclic amenable, then so are $\mathcal{A}$ and
$\mathcal{B}$ provided that $\mathcal{A}$ and
$\mathcal{B}$ admit nonzero character. Finally, some mild conditions can be imposed on
$\mathcal{A}$ such that the approximate cyclic amenability of
$\mathcal{A}^{**}$ with the first or the second Arens products,
implies the approximate cyclic amenability of $\mathcal{A}$.

 Let $\mathcal{A}$ be a Banach algebra and $\mathcal{X}$ be a Banach
$\mathcal{A}$-bimodule. Then $\mathcal{X}^{*}$ is a Banach
$\mathcal{A}$-bimodule with module actions
$$\langle a\cdot x^{*},x\rangle=\langle x^{*},x\cdot a\rangle\quad,\quad \langle x^{*}\cdot a,x\rangle=\langle x^{*},a\cdot x\rangle\quad (a\in A, x\in X, x^{*}\in X^{*}).$$

A {\it derivation} from a Banach algebra $\mathcal{A}$ into a
Banach $\mathcal{A}$-bimodule $\mathcal{X}$ is a bounded linear
mapping $D:\mathcal{A}\longrightarrow \mathcal{X}$ such that
$D(ab)=D(a)\cdot b+a\cdot D(b)$ for every $a,b\in A$. A derivation
$D:\mathcal{A}\longrightarrow \mathcal{X}$ is called {\it inner}
if there exists $x\in X$ such that $D(a)=a\cdot x-x\cdot
a=\delta_{x}(a)\quad (a\in A)$. A Banach algebra $\mathcal{A}$
is called {\it amenable} if every bounded derivation
$D:\mathcal{A}\longrightarrow \mathcal{X}^{*}$ is inner for every Banach $\mathcal A$-bimodule $\mathcal X$. A
bounded derivation $D:\mathcal{A}\longrightarrow \mathcal{A}^{*}$
is called {\it cyclic} if $\langle Da,b\rangle+\langle
Db,a\rangle=0$ for all $a,b\in \mathcal{A}$. A Banach algebra
$\mathcal{A}$ is said to be {\it cyclic amenable} if every cyclic
bounded derivation $D:\mathcal{A}\longrightarrow \mathcal{A}^{*}$
is inner. A derivation $D:\mathcal{A}\longrightarrow \mathcal{X}$
is called {\it approximately inner} if there exists a net
$(x_{\alpha
 })\subseteq\mathcal{X}$ such that $$D(a)=\lim_{\alpha}(a\cdot x_{\alpha}-x_{\alpha}\cdot a)\quad (a\in \mathcal{A}).$$

Let $\mathcal{A}$ be an arbitrary Banach algebra. The first and
second Arens multiplications on $\mathcal{A}^{**}$  which are
denoted by $``\, \square\, "$ and $``\,\lozenge\,"$ respectively,
are defined in three steps. For every $a,b \in
\mathcal{A},a^{*}\in \mathcal{A}^{*}$ and $a^{**}, b^{**}\in
A^{**}$, the elements $a^{*}\cdot a, a\cdot a^{*}, a^{**}\cdot
a^{*}, a^{*}\cdot a^{**}$ of $\mathcal{A}^{*}$ and $a^{**}\square
b^{**}, a^{**}\lozenge b^{**}$ of $\mathcal{A}^{**}$ are defined
in the following way:
$$\langle a^{*}\cdot a,b\rangle=\langle a^{*},ab\rangle , \quad \langle a\cdot a^{*},b\rangle=\langle a^{*},ba\rangle$$
$$\langle a^{**}\cdot a^{*},a\rangle=\langle a^{**},a^{*}\cdot a\rangle,\quad \langle a^{*}\cdot a^{**},b\rangle=\langle a^{**},b\cdot a^{*}\rangle$$
$$\langle a^{**}\square b^{**},a^{*}\rangle =\langle a^{**},b^{**}\cdot a^{*}\rangle, \quad \langle a^{**}\lozenge b^{**},a^{*}\rangle=\langle b^{**},a^{*}\cdot a^{**}\rangle.$$
When these two products coincide on $\mathcal A^{**}$, we say
that $\mathcal A$ is {\it Arens regular} (for more details refer
to \cite{dal1}). 
 
 
\section{Approximate cyclic amenability }

We first recall the relevant material from \cite{ess}, thus making
our exposition self-contained.

\begin{definition}  A Banach
algebra $\mathcal{A}$ is called approximately cyclic amenable if
every cyclic derivation $D:\mathcal{A}\longrightarrow
\mathcal{A}^{*}$ is approximately inner.
\end{definition}

It is shown in  \cite[Example 4.3]{ess} that there is an
approximately cyclic amenable Banach algebra which is not cyclic
amenable. So the distinction between the cyclic amenability and the
approximate cyclic amenability of Banach algebras are followed
(see also Example 2.5). \vspace {0.2 cm}

Let $\mathcal{A}$ be a non-unital algebra. We denote by
$\mathcal{A}^{\#}$, the unitization algebra of $\mathcal{A}$,
formed by adjoining an identity to $\mathcal{A}$ so that
$\mathcal{A}^{\#}=\mathcal{A}\oplus\Bbb C$, with the product
$$(a,\alpha)(b,\beta)=(ab+\beta a+\alpha b,\alpha\beta)\quad\quad (a,b\in \mathcal{A},\quad \alpha,\beta\in \Bbb C).$$

In the case where  $\mathcal{A}$ is a Banach algebra,
$\mathcal{A}^{\#}$ is also a Banach algebra which contains
$\mathcal{A}$ as a closed ideal. The following result shows the
relationship of their approximate cyclic amenability which is
proved in \cite[Proposition 4.1]{ess}. 

\begin{prop} \label{ropo}
Let  $\mathcal{A}$ be a non-unital
Banach algebra. The unitization  algebra $\mathcal{A}^{\#}$ is
approximately cyclic amenable if and only if $\mathcal{A}$ is
approximately cyclic amenable.
\end{prop}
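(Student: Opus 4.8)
The plan is to make systematic use of the concrete description of the dual of the unitization and of two natural maps relating the two module structures. Identify $(\mathcal{A}^{\#})^{*}$ with $\mathcal{A}^{*}\oplus\mathbb{C}$ through the pairing $\langle(f,\lambda),(a,\alpha)\rangle=\langle f,a\rangle+\lambda\alpha$, write $1=(0,1)$ for the adjoined identity, and introduce the restriction map $\pi\colon(\mathcal{A}^{\#})^{*}\to\mathcal{A}^{*}$, $(f,\lambda)\mapsto f$, together with the embedding $\iota\colon\mathcal{A}^{*}\to(\mathcal{A}^{\#})^{*}$, $f\mapsto(f,0)$. A short computation with the $\mathcal{A}^{\#}$-actions shows that $\pi$ is an $\mathcal{A}$-bimodule morphism, so that post-composition with $\pi$ turns any $\mathcal{A}^{\#}$-derivation, restricted to $\mathcal{A}$, into a derivation valued in $\mathcal{A}^{*}$. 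The embedding $\iota$ is \emph{not} a module map, but the scalar discrepancies cancel in commutators, yielding the identity $\delta_{\iota(f)}(a,0)=\iota(\delta_{f}(a))$ for $a\in\mathcal{A}$ and $f\in\mathcal{A}^{*}$. I shall also use that, $\mathcal{A}^{\#}$ being unital, every derivation out of $\mathcal{A}^{\#}$ and every inner derivation $\delta_{\xi}$ vanish at $1$; hence both a cyclic derivation and an approximating net need only be controlled on the elements $(a,0)$, $a\in\mathcal{A}$.

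For the implication that approximate cyclic amenability of $\mathcal{A}^{\#}$ yields that of $\mathcal{A}$, I would take a cyclic derivation $D\colon\mathcal{A}\to\mathcal{A}^{*}$ and extend it by $\tilde D(a,\alpha):=\iota(Da)=(Da,0)$. The main point is to verify that $\tilde D$ is a cyclic derivation on $\mathcal{A}^{\#}$: expanding $\tilde D(xy)-\tilde D(x)\cdot y-x\cdot\tilde D(y)$ for $x=(a,\alpha)$, $y=(b,\beta)$, the $\mathcal{A}^{*}$-component collapses to the derivation identity for $D$, while the scalar component equals precisely $\langle Da,b\rangle+\langle Db,a\rangle$, which vanishes because $D$ is cyclic; cyclicity of $\tilde D$ itself is then immediate from $\langle\tilde D(a,\alpha),(b,\beta)\rangle=\langle Da,b\rangle$. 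Approximate cyclic amenability of $\mathcal{A}^{\#}$ now supplies a net $(\xi_{\gamma})\subseteq(\mathcal{A}^{\#})^{*}$ with $\tilde D=\lim_{\gamma}\delta_{\xi_{\gamma}}$ pointwise. Applying the bimodule morphism $\pi$ and putting $f_{\gamma}:=\pi(\xi_{\gamma})$, continuity of $\pi$ gives $Da=\pi(\tilde D(a,0))=\lim_{\gamma}\delta_{f_{\gamma}}(a)$ for every $a\in\mathcal{A}$, so $D$ is approximately inner.

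Conversely, to deduce approximate cyclic amenability of $\mathcal{A}^{\#}$ from that of $\mathcal{A}$, I would begin with a cyclic derivation $\tilde D\colon\mathcal{A}^{\#}\to(\mathcal{A}^{\#})^{*}$ and set $D:=\pi\circ\tilde D|_{\mathcal{A}}$. Because $\pi$ is a bimodule morphism, $D$ is a derivation into $\mathcal{A}^{*}$, and the relation $\langle Da,b\rangle=\langle\tilde D(a,0),(b,0)\rangle$ shows that it inherits cyclicity from $\tilde D$. By hypothesis there is a net $(f_{\gamma})\subseteq\mathcal{A}^{*}$ with $Da=\lim_{\gamma}\delta_{f_{\gamma}}(a)$. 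To promote this to $\mathcal{A}^{\#}$, I would first note that cyclicity of $\tilde D$ together with $\tilde D(1)=0$ forces $\langle\tilde D(a,0),1\rangle=0$, so the scalar component of $\tilde D(a,0)$ vanishes and $\tilde D(a,0)=\iota(Da)$. Taking $\xi_{\gamma}:=\iota(f_{\gamma})$ and invoking the identity $\delta_{\iota(f)}(a,0)=\iota(\delta_{f}(a))$ with continuity of $\iota$ yields $\lim_{\gamma}\delta_{\xi_{\gamma}}(a,0)=\iota(Da)=\tilde D(a,0)$; since both $\tilde D$ and each $\delta_{\xi_{\gamma}}$ annihilate $1$, this pointwise convergence extends to all of $\mathcal{A}^{\#}$, and $\tilde D$ is approximately inner.

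The step I expect to require the most care is the interaction between the two different bimodules $\mathcal{A}^{*}$ (over $\mathcal{A}$) and $(\mathcal{A}^{\#})^{*}$ (over $\mathcal{A}^{\#}$), precisely because $\iota$ fails to be a module homomorphism. The whole argument hinges on two bookkeeping facts about the scalar coordinate: that $\pi$ \emph{is} an honest $\mathcal{A}$-bimodule morphism, and that cyclicity makes the scalar-valued obstruction to $\iota\circ D$ being a derivation vanish identically, so that $\tilde D(a,0)=\iota(Da)$ on the nose. Once these are in place, the passage between approximating nets in the two directions is routine, since approximate innerness requires only pointwise norm convergence and both $\pi$ and $\iota$ are norm-decreasing.
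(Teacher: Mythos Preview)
Your argument is correct and complete; in particular, the key observation that cyclicity of $D$ is exactly what makes the scalar component of $\tilde D(x)\cdot y + x\cdot\tilde D(y)$ vanish, so that $\tilde D$ is a genuine derivation on $\mathcal{A}^{\#}$, is the heart of the matter and you have handled it cleanly. The paper itself does not give a proof of this proposition at all: it merely cites \cite[Proposition~4.1]{ess}, so your self-contained argument in fact supplies more than the paper does.
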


Let $\mathcal{I}$ be a closed ideal in $\mathcal{A}$. We say that
$\mathcal{I}$ has {\it the approximate trace extension property}
if for each $a^*\in \mathcal{I}^*$ with $a\cdot a^*=a^*\cdot a\,
(a\in\mathcal{A})$ there is a net $(a_{\alpha}^*)\subseteq
\mathcal{A}^*$ such that $$a_{\alpha}^*\mid_{\mathcal{I}}=a^*
(\text{for any $\alpha$})\qquad \text{and}\qquad a\cdot
a_{\alpha}^*-a_{\alpha}^*\cdot a\longrightarrow 0\,
\quad(a\in\mathcal{A}).$$

We also say that a bounded approximate identity $\{e_{\alpha}\}$
of $\mathcal{I}$ is {\it quasi central} for $\mathcal{A}$ if
$\lim_{\alpha}\|ae_{\alpha}-e_{\alpha}a\|=0$ for all
$a\in\mathcal{A}$.

\begin{prop} \label{ppp}
Let $\mathcal{A}$ be a Banach
algebra with a closed ideal $\mathcal{I}$.
\begin{enumerate}
\item[\emph{(i)}] {Suppose that $\mathcal{A}/\mathcal{I}$ is approximately cyclic amenable. Then $\mathcal{I}$ has the approximate trace extension
property;}
\item[\emph{(ii)}] {Suppose that $\mathcal{A}$ is cyclic amenable and $\mathcal{I}$ has the approximate trace extension property.
Then $\mathcal{A}/\mathcal{I}$ is approximately cyclic amenable;}
\item[\emph{(iii)}] {If $\mathcal{A}/\mathcal{I}$ is approximately cyclic amenable, $\overline{\mathcal{I}^2}=\mathcal{I}$, and $\mathcal{I}$ is cyclic amenable,
then $\mathcal{A}$ is approximately cyclic amenable;}
\item[\emph{(iv)}] {Suppose that $\mathcal{A}$ is approximately cyclic amenable
and $\mathcal{I}$ has a quasi-central bounded approximate
identity for $\mathcal{A}$. Then $\mathcal{I}$ is approximately
cyclic amenable.}
\end{enumerate}
\end{prop}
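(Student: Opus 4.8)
The plan is to run all four parts through one device: the standard identification of $(\mathcal{A}/\mathcal{I})^{*}$ with the annihilator $\mathcal{I}^{\perp}=\{a^{*}\in\mathcal{A}^{*}:a^{*}|_{\mathcal{I}}=0\}$, which is an $\mathcal{A}$-bimodule on which $\mathcal{I}$ acts trivially, together with the elementary fact that for every $a^{*}\in\mathcal{A}^{*}$ the inner derivation $\delta_{a^{*}}\colon a\mapsto a\cdot a^{*}-a^{*}\cdot a$ is automatically cyclic. For (i) I would take a trace $a^{*}\in\mathcal{I}^{*}$, i.e. $a\cdot a^{*}=a^{*}\cdot a\ (a\in\mathcal{A})$, and fix a Hahn--Banach extension $\tilde a^{*}\in\mathcal{A}^{*}$. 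The trace condition forces $\delta_{\tilde a^{*}}$ to vanish on $\mathcal{I}$ and to take values in $\mathcal{I}^{\perp}$, since for $x\in\mathcal{I}$, $y\in\mathcal{A}$ one has $\langle\delta_{\tilde a^{*}}(x),y\rangle=\langle a^{*},yx-xy\rangle=0$. Hence $\delta_{\tilde a^{*}}$ drops to a cyclic derivation $\bar D\colon\mathcal{A}/\mathcal{I}\to(\mathcal{A}/\mathcal{I})^{*}$. Approximate cyclic amenability of $\mathcal{A}/\mathcal{I}$ yields a net $(b_{\alpha}^{*})\subseteq\mathcal{I}^{\perp}$ with $\delta_{\tilde a^{*}}(a)=\lim_{\alpha}(a\cdot b_{\alpha}^{*}-b_{\alpha}^{*}\cdot a)$, and then $a_{\alpha}^{*}=\tilde a^{*}-b_{\alpha}^{*}$ satisfies $a_{\alpha}^{*}|_{\mathcal{I}}=a^{*}$ and $a\cdot a_{\alpha}^{*}-a_{\alpha}^{*}\cdot a\to0$, which is precisely the approximate trace extension property.

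Part (ii) runs this correspondence backwards. Given a cyclic derivation $\bar D\colon\mathcal{A}/\mathcal{I}\to(\mathcal{A}/\mathcal{I})^{*}$, composing with the quotient map gives a cyclic derivation $D\colon\mathcal{A}\to\mathcal{I}^{\perp}\subseteq\mathcal{A}^{*}$. Cyclic amenability of $\mathcal{A}$ makes $D=\delta_{a^{*}}$ for some $a^{*}\in\mathcal{A}^{*}$. Since $D$ annihilates $\mathcal{I}$, the restriction $a^{*}|_{\mathcal{I}}$ is a trace on $\mathcal{I}$, so the approximate trace extension property supplies $(b_{\alpha}^{*})\subseteq\mathcal{A}^{*}$ with $b_{\alpha}^{*}|_{\mathcal{I}}=a^{*}|_{\mathcal{I}}$ and $a\cdot b_{\alpha}^{*}-b_{\alpha}^{*}\cdot a\to0$. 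Then $c_{\alpha}^{*}=a^{*}-b_{\alpha}^{*}\in\mathcal{I}^{\perp}=(\mathcal{A}/\mathcal{I})^{*}$ and $\delta_{c_{\alpha}^{*}}(a)\to D(a)$, which read on the quotient says exactly that $\bar D$ is approximately inner.

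Part (iii) is the genuinely constructive step. Restricting a cyclic derivation $D\colon\mathcal{A}\to\mathcal{A}^{*}$ and composing with $\mathcal{A}^{*}\to\mathcal{I}^{*}$ gives a cyclic derivation $d$ on $\mathcal{I}$; cyclic amenability of $\mathcal{I}$ gives $d=\delta_{\xi}$ for some $\xi\in\mathcal{I}^{*}$. Extending $\xi$ to $\tilde\xi\in\mathcal{A}^{*}$ and passing to $D'=D-\delta_{\tilde\xi}$, one finds $D'(x)\in\mathcal{I}^{\perp}$ for $x\in\mathcal{I}$. The hypothesis $\overline{\mathcal{I}^{2}}=\mathcal{I}$ now does the work twice: first $D'(xy)=D'(x)\cdot y+x\cdot D'(y)=0$ gives $D'|_{\mathcal{I}}=0$, and then $D'(xa)=x\cdot D'(a)=0$ forces $D'(a)\in\mathcal{I}^{\perp}$ for every $a\in\mathcal{A}$. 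Thus $D'$ descends to a cyclic derivation on $\mathcal{A}/\mathcal{I}$, approximately inner by hypothesis; lifting the implementing net into $\mathcal{I}^{\perp}\subseteq\mathcal{A}^{*}$ shows $D'$, and hence $D=D'+\delta_{\tilde\xi}$, is approximately inner.

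The main obstacle is part (iv), whose only substantial content is extending a cyclic derivation from the ideal to the whole algebra. Given a cyclic derivation $d\colon\mathcal{I}\to\mathcal{I}^{*}$ and a quasi-central bounded approximate identity $\{e_{\alpha}\}$, I would define $D\colon\mathcal{A}\to\mathcal{A}^{*}$ by $\langle D(a),b\rangle=\lim_{\alpha}\langle d(e_{\alpha}ae_{\alpha}),e_{\alpha}be_{\alpha}\rangle$, where the boundedness of $\{e_{\alpha}\}$ and weak$^{*}$ compactness of bounded subsets of $\mathcal{A}^{*}$ guarantee the limit exists along a fixed universal subnet simultaneously for all $a,b$. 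The delicate points, where quasi-centrality $\|ae_{\alpha}-e_{\alpha}a\|\to0$ is indispensable, are verifying that $D$ is a derivation and is cyclic, and that $D$ genuinely restricts to $d$ on $\mathcal{I}$ (this last using $e_{\alpha}x\to x$). Once $D$ is available, approximate cyclic amenability of $\mathcal{A}$ produces a net $(\zeta_{\beta})\subseteq\mathcal{A}^{*}$ implementing $D$ approximately, and restricting the $\zeta_{\beta}$ to $\mathcal{I}$ implements $d$ approximately, so $\mathcal{I}$ is approximately cyclic amenable.
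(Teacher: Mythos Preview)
Your argument is correct and is precisely the standard route: the paper does not give an independent proof but simply refers to \cite[Proposition~2.2]{ess}, whose proof (for approximate weak amenability) is the same chain of ideas you have written out, adapted to track the cyclic condition. Your use of $\mathcal{I}^{\perp}\cong(\mathcal{A}/\mathcal{I})^{*}$, the Hahn--Banach lift of a trace in (i), the reverse passage in (ii), the two applications of $\overline{\mathcal{I}^{2}}=\mathcal{I}$ in (iii), and the quasi-central extension in (iv) are exactly the expected steps, and the places you flag as delicate (verifying that the extended map in (iv) is a cyclic derivation restricting to $d$) are indeed where quasi-centrality is used; these verifications go through by routine estimates of the form $\|e_{\alpha}abe_{\alpha}-(e_{\alpha}ae_{\alpha})(e_{\alpha}be_{\alpha})\|\to 0$.
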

\begin{proof}
The proof of all parts is similar to
\cite[Proposition 2.2]{ess}.\end{proof}

Example 4.4 of \cite{ess} shows that the condition
$\overline{\mathcal{I}^2}=\mathcal{I}$ is necessary in Proposition
\ref{ppp} (iii). So this condition can not be removed.

\begin{theorem} \label{te}
Let $\mathcal{A}$ and $\mathcal{B}$ be
Banach algebras. Then the following statements hold:
\begin{enumerate}
\item[\emph{(i)}] {Suppose  $\mathcal{A}$ is commutative and
$\overline{\mathcal{A}^{2}}=\mathcal{A}$. If $\mathcal{A}$ and
$\mathcal{B}$ are approximately cyclic amenable, then
$\mathcal{A}\oplus\mathcal{B}$ is approximately cyclic amenable;}
\item[\emph{(ii)}] {If $\mathcal{A}\oplus\mathcal{B}$ is approximately cyclic amenable, then both $\mathcal{A}$ and $\mathcal{B}$ are approximately cyclic
amenable.}
\end{enumerate}
\end{theorem}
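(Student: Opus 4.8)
The plan is to exploit the coordinate structure of $\mathcal{A}\oplus\mathcal{B}$ (coordinatewise product, $\ell^{1}$-norm) to transfer cyclic derivations between the summands and the direct sum. Throughout I identify $(\mathcal{A}\oplus\mathcal{B})^{*}$ with $\mathcal{A}^{*}\oplus\mathcal{B}^{*}$ and write a derivation $D\colon\mathcal{A}\oplus\mathcal{B}\to\mathcal{A}^{*}\oplus\mathcal{B}^{*}$ in components $D(a,b)=(D_{1}(a,b),D_{2}(a,b))$, splitting each component along the two summands by setting $d_{AA}(a)=D_{1}(a,0)$, $d_{AB}(b)=D_{1}(0,b)$, $d_{BA}(a)=D_{2}(a,0)$, $d_{BB}(b)=D_{2}(0,b)$, so that $D_{1}(a,b)=d_{AA}(a)+d_{AB}(b)$ and $D_{2}(a,b)=d_{BA}(a)+d_{BB}(b)$ by linearity. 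For part (ii) I would argue directly: given a cyclic derivation $d\colon\mathcal{A}\to\mathcal{A}^{*}$, define $D(a,b)=(d(a),0)$. A short check of the module actions shows $D$ is a bounded derivation, and the cyclic identity for $D$ collapses to that for $d$, so $D$ is cyclic. Approximate cyclic amenability of $\mathcal{A}\oplus\mathcal{B}$ then supplies a net $(f_{\alpha},g_{\alpha})$ in $\mathcal{A}^{*}\oplus\mathcal{B}^{*}$ implementing $D$; reading off the first coordinate at $(a,0)$ gives $d(a)=\lim_{\alpha}(a\cdot f_{\alpha}-f_{\alpha}\cdot a)$, so $d$ is approximately inner. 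The same argument with the roles reversed handles $\mathcal{B}$.

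Part (i) is the substantial direction. Starting from a cyclic derivation $D$ on $\mathcal{A}\oplus\mathcal{B}$, I would first insert $b=b'=0$ (respectively $a=a'=0$) into the derivation identity to see that $d_{AA}$ and $d_{BB}$ are derivations, and restrict the cyclic identity to each summand to see that they are cyclic. The key computation is to dispose of the cross terms: expanding $D_{1}((a,b)(a',b'))$ and cancelling the $d_{AA}$-derivation identity leaves $d_{AB}(bb')=d_{AB}(b)\cdot a'+a\cdot d_{AB}(b')$ for all $a,a',b,b'$. Since the left-hand side is independent of $a,a'$, putting $a=a'=0$ forces $d_{AB}(bb')=0$ and then $a\cdot d_{AB}(b')=0$ for every $a$, so $d_{AB}(b')$ annihilates $\mathcal{A}^{2}$ and hence, by $\overline{\mathcal{A}^{2}}=\mathcal{A}$ and continuity, vanishes. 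The symmetric manipulation on $D_{2}$ yields $d_{BA}(aa')=0$, so $d_{BA}$ vanishes on $\overline{\mathcal{A}^{2}}=\mathcal{A}$ as well. Thus $D(a,b)=(d_{AA}(a),d_{BB}(b))$.

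With the cross terms eliminated, I would bring in the two standing hypotheses on $\mathcal{A}$. Since $\mathcal{A}$ is approximately cyclic amenable, the cyclic derivation $d_{AA}$ is approximately inner; and since $\mathcal{A}$ is commutative, the dual module $\mathcal{A}^{*}$ is symmetric, i.e. $a\cdot x=x\cdot a$ for $x\in\mathcal{A}^{*}$, so every inner derivation of $\mathcal{A}$ into $\mathcal{A}^{*}$ is zero. Hence the approximating inner derivations are identically zero and $d_{AA}=0$. It then remains to implement $D(a,b)=(0,d_{BB}(b))$: approximate cyclic amenability of $\mathcal{B}$ gives a net $(g_{\beta})$ in $\mathcal{B}^{*}$ with $d_{BB}(b)=\lim_{\beta}(b\cdot g_{\beta}-g_{\beta}\cdot b)$, and the lifted net $(0,g_{\beta})$ in $\mathcal{A}^{*}\oplus\mathcal{B}^{*}$ satisfies $(a,b)\cdot(0,g_{\beta})-(0,g_{\beta})\cdot(a,b)=(0,b\cdot g_{\beta}-g_{\beta}\cdot b)\to D(a,b)$, exhibiting $D$ as approximately inner.

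The main obstacle is the cross-term analysis: the derivation identity genuinely mixes the two summands, and one must extract from it that the off-diagonal pieces $d_{AB},d_{BA}$ are forced to vanish. This is precisely where $\overline{\mathcal{A}^{2}}=\mathcal{A}$ is indispensable, since it is what upgrades ``annihilates $\mathcal{A}^{2}$'' to ``annihilates $\mathcal{A}$''. The role of commutativity is secondary but convenient: it forces the clean conclusion $d_{AA}=0$, so that a single net in the second coordinate implements $D$; without it one would instead have to combine the two approximating nets over a product directed set, which is feasible but less transparent.
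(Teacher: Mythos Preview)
Your argument is correct but takes a genuinely different route from the paper. The paper's proof is essentially two lines of bookkeeping against earlier propositions: for (i) it notes that commutativity collapses approximate cyclic amenability to cyclic amenability (the same observation underlying your step $d_{AA}=0$) and then invokes Proposition~\ref{ppp}(iii) with $\mathcal{I}=\mathcal{A}\lhd\mathcal{A}\oplus\mathcal{B}$; for (ii) it passes to unitizations via Proposition~\ref{ropo}, observes that $\mathcal{A}^{\#}$ and $\mathcal{B}^{\#}$ sit as ideals with quasi-central bounded approximate identities inside $\mathcal{A}^{\#}\oplus\mathcal{B}^{\#}$, applies Proposition~\ref{ppp}(iv), and de-unitizes. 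By contrast you work entirely by hand with the $2\times 2$ block decomposition of a derivation on the direct sum, and in (ii) you simply embed a given cyclic derivation on a summand and project the implementing net back. Your approach is more elementary and self-contained, and it makes the role of each hypothesis visible: $\overline{\mathcal{A}^{2}}=\mathcal{A}$ is exactly what kills the off-diagonal blocks, and commutativity is what forces $d_{AA}=0$. Your closing remark that commutativity could be traded for a product-net argument is accurate and in fact yields a mild strengthening of part (i) that the paper's route, tied as it is to the \emph{cyclic} (not merely approximate) amenability hypothesis in Proposition~\ref{ppp}(iii), does not immediately give.
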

\begin{proof} (i): Since in the commutative case cyclic
amenability and approximate cyclic amenability of Banach algebras
are the same, the result follows from Proposition \ref{ppp} (iii).

(ii): Without loss of generality and in view of Proposition
\ref{ropo}, we see that $\mathcal{A}^{\#}\oplus\mathcal{B}^{\#}$ is
approximately cyclic amenable. Moreover, $\mathcal{A}^{\#}$ and
$\mathcal{B}^{\#}$ are ideals in
$\mathcal{A}^{\#}\oplus\mathcal{B}^{\#}$ that have quasi central
bounded approximate identities. Now by Proposition \ref{ppp} (iv),
$\mathcal{A}^{\#}$ and $\mathcal{B}^{\#}$ and as a result of
Proposition \ref{ropo}, $\mathcal{A}$ and $\mathcal{B}$ are
approximately cyclic amenable.
\end{proof}

In the general case, the converse of Theorem \ref{te} (ii) is not
true. Indeed, the condition
$\overline{\mathcal{A}^{2}}=\mathcal{A}$ in part (i) is
necessary as we will see in the following example.

Recall that a character on $\mathcal{A}$ is a non-zero
homomorphism from $\mathcal{A}$ into $\Bbb C$. The set of
characters on $\mathcal{A}$ is called the character space of
$\mathcal{A}$ and is denoted by $\Phi_{\mathcal{A}}$. Let
$\varphi\in \Phi_{\mathcal{A}}\cup\{0\}$. Then a linear functional
$d\in\mathcal{A}^*$ is a point derivation at $\varphi$ if
$$d(ab)=\varphi(a)d(b)+\varphi(b)d(a)\quad (a,b\in \mathcal{A}).$$

\begin{example}(i) Let $\mathcal{A}$ be a non-zero Banach
algebra with zero product, that is $ab=0$ for all $a,b\in
\mathcal{A}$. It is shown in \cite[Example 2.5]{gro} that such Banach algebra is cyclicly amenable if and only if its dimension is one. Consider $\mathcal{A}=\Bbb C$  with the zero product. Then $\mathcal{A}$ is approximately cyclic amenable. Since the
properties of cyclic amenability and  approximate cyclic amenability are the same for commutative Banach algebras, $\mathcal{A}\oplus\mathcal{A}$ is not approximately cyclic amenable.

(ii) Let $\mathcal{A}$ be a [approximately] weakly amenable Banach algebra. Then
$\overline{\mathcal{A}^{2}}=\mathcal{A}$. But from
part (i) we conclude  that in the general case this condition is
not necessary when $\mathcal{A}$ is [approximately] cyclic
amenable. Meanwhile, we have seen that $\mathcal{A}$ is
[approximately] cyclicly amenable, while it is not [approximately]
weakly amenable.

(iii) It is known  that if $\mathcal{A}$ [approximately] weakly
amenable then there is no non-zero continuous point derivation
on $\mathcal{A}$. This is not true for
[approximate] cyclic amenability. In other words, if
$\mathcal{A}=\Bbb C$ with the zero product, then for every
non-zero map $d\in \mathcal{A}^{*}$ and zero map $\varphi$, $d$
is a non-zero continuous point derivation at $\varphi$. However $\mathcal{A}$ is [approximately] cyclic amenable.
\end{example}

Let $\mathcal{A}$ be a Banach algebra, $\mathcal{ X}$ be a Banach
$\mathcal{A}$-bimodule and $n\in\Bbb N$. We shall regard
$M_{n}(\mathcal{\mathcal{X}})$ as a Banach
$M_{n}(\mathcal{A})$-bimodule in the obvious way so that
$$(a\cdot x)_{ij}=\sum_{k=1}^{n}a_{ik}\cdot x_{kj}\quad\quad (a=(a_{ij})\in M_{n}(\mathcal{A}), x=(x_{ij})\in M_{n}(\mathcal{X})).$$

We have the identity
$$M_{n}(\mathcal{X})^{*}=M_{n}(\mathcal{X}^{*})=\mathcal{X}^{*}\widehat{\otimes}M_{n}$$
with duality
$$\langle\Lambda,x\rangle=\sum_{i,j=1}^{n}\langle\lambda_{ij},x_{ij}\rangle, \quad\quad
(x=(x_{ij})\in M_{n}(\mathcal{X}),\Lambda=(\lambda_{ij})\in M_{n}(\mathcal{X}^*)).$$

Note that

\begin{equation}\label{ee1}(a\cdot\Lambda)_{ij}=\sum_{k=1}^{n}a_{jk}\cdot\lambda_{ik}\quad
\text {and}\quad
(\Lambda\cdot a)_{ij}=\sum_{k=1}^{n}\lambda_{kj}\cdot a_{ki}.
\end{equation}
for $a=(a_{ij})\in M_{n}(\mathcal{A})$ and
$\Lambda=(\lambda_{ij})\in M_{n}(\mathcal{X})^{*}$. One should
remember that $M_{n}(\mathcal{A})$ is isometrically algebra
isomorphic to $M_{n}\widehat{\otimes}\mathcal{A}$.

\begin{theorem}\label{thmm}
Let $\mathcal{A}$ be a Banach algebra
and $n\in\Bbb N$. Then $\mathcal{A}$ is approximately cyclic
amenable if and only if $M_{n}(\mathcal{A})$ is approximately
cyclic amenable.
\end{theorem}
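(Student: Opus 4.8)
The plan is to set up an explicit correspondence between cyclic derivations on $\mathcal{A}$ and on $M_{n}(\mathcal{A})$, built from \emph{amplification} and from \emph{compression} to the $(1,1)$-corner, and then to check that this correspondence carries approximate innerness back and forth. Throughout I would exploit the elementary but crucial remark that \emph{every} inner derivation $\delta_{\Lambda}\colon\mathcal{B}\to\mathcal{B}^{*}$ (for any Banach algebra $\mathcal{B}$ and $\Lambda\in\mathcal{B}^{*}$) is automatically cyclic, since $\langle\delta_{\Lambda}(x),y\rangle+\langle\delta_{\Lambda}(y),x\rangle=\langle\Lambda,(yx-xy)+(xy-yx)\rangle=0$. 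This lets me subtract inner derivations without leaving the class of cyclic derivations, so that ``approximately inner'' becomes the only thing left to track.

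For the implication that approximate cyclic amenability of $M_{n}(\mathcal{A})$ forces that of $\mathcal{A}$, I would start from a cyclic derivation $D\colon\mathcal{A}\to\mathcal{A}^{*}$ and amplify it to $\mathcal{D}_{0}\colon M_{n}(\mathcal{A})\to M_{n}(\mathcal{A})^{*}=M_{n}(\mathcal{A}^{*})$ by $(\mathcal{D}_{0}(a))_{ij}=D(a_{ji})$. A direct computation with the module actions in \eqref{ee1} shows $\mathcal{D}_{0}$ is again a derivation, and its cyclicity follows from that of $D$ after relabelling indices. Since $M_{n}(\mathcal{A})$ is approximately cyclic amenable, there is a net $(\Lambda_{\beta})\subseteq M_{n}(\mathcal{A}^{*})$ with $\mathcal{D}_{0}(x)=\lim_{\beta}(x\cdot\Lambda_{\beta}-\Lambda_{\beta}\cdot x)$. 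Evaluating at $x=a\,e_{11}$ (the matrix with $a$ in the $(1,1)$ place) and reading off the $(1,1)$-entry, a bounded operation so convergence persists, gives $D(a)=\lim_{\beta}\bigl(a\cdot(\Lambda_{\beta})_{11}-(\Lambda_{\beta})_{11}\cdot a\bigr)$. Hence $D$ is approximately inner, implemented by $(\Lambda_{\beta})_{11}\in\mathcal{A}^{*}$. This half needs no unitality.

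For the reverse implication I would first assume $\mathcal{A}$ unital and let $\mathcal{D}\colon M_{n}(\mathcal{A})\to M_{n}(\mathcal{A})^{*}$ be cyclic. Restricting $\mathcal{D}$ to the unital subalgebra $M_{n}=M_{n}(\mathbb{C}\cdot 1)$ gives a derivation into a dual bimodule; since $M_{n}$ is amenable there is $\Lambda_{0}$ with $\mathcal{D}=\delta_{\Lambda_{0}}$ on $M_{n}$. By the opening remark $\delta_{\Lambda_{0}}$ is cyclic, so $\mathcal{D}'=\mathcal{D}-\delta_{\Lambda_{0}}$ is a cyclic derivation vanishing on $M_{n}$, whence $\mathcal{D}'(uxv)=u\cdot\mathcal{D}'(x)\cdot v$ for $u,v\in M_{n}$. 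Writing $a=\sum_{i,j}e_{i1}(a_{ij}e_{11})e_{1j}$ and using that $e_{11}\cdot\Lambda\cdot e_{11}=\lambda_{11}e_{11}$, one finds that $\mathcal{D}'$ is precisely the amplification of the corner derivation $D(a):=(\mathcal{D}'(a\,e_{11}))_{11}$, which is cyclic on $\mathcal{A}$. Now approximate cyclic amenability of $\mathcal{A}$ makes $D$ approximately inner, say via $(a^{*}_{\gamma})\subseteq\mathcal{A}^{*}$, and then $\mathrm{diag}(a^{*}_{\gamma},\dots,a^{*}_{\gamma})$ implements $\mathcal{D}'$ approximately; adding back the inner $\delta_{\Lambda_{0}}$ shows $\mathcal{D}$ is approximately inner.

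The main obstacle is the non-unital case of this last implication, because the entire reduction rests on the scalar matrix units $e_{ij}$, which lie in $M_{n}(\mathcal{A})$ only when $\mathcal{A}$ has an identity. I would reduce to the unital case through Proposition \ref{ropo}, but this produces $M_{n}(\mathcal{A}^{\#})$ rather than $M_{n}(\mathcal{A})^{\#}$, so the remaining and most delicate point is to transfer approximate cyclic amenability from $M_{n}(\mathcal{A}^{\#})$ down to its closed ideal $M_{n}(\mathcal{A})$, whose quotient $M_{n}(\mathcal{A}^{\#})/M_{n}(\mathcal{A})\cong M_{n}(\mathbb{C})$ is finite dimensional and hence cyclic amenable. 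I expect this descent to require Proposition \ref{ppp} together with a quasi-central approximate identity argument, and it is here that the proof must be carried out with the greatest care.
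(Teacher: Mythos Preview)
Your treatment of both implications in the \emph{unital} case is essentially the paper's own: the paper also subtracts an inner derivation coming from amenability of $M_{n}$ to force $D|_{M_{n}}=0$, reduces to a corner derivation $d\colon\mathcal{A}\to\mathcal{A}^{*}$, and then re-inflates via diagonal matrices; for the other direction it uses the amplification $D\otimes 1$ and reads off the $(1,1)$-entry, exactly as you do. So on the substantive content you and the paper agree.

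Where you diverge is the reduction to the unital case, and here you are \emph{more} careful than the paper. The paper simply asserts $M_{n}(\mathcal{A})^{\#}=M_{n}(\mathcal{A}^{\#})$ and invokes Proposition~\ref{ropo}; as you observe, this identification is false for $n\geq 2$ (the left-hand side adjoins a single scalar, the right-hand side adjoins all of $M_{n}(\mathbb{C})$). So you have spotted a genuine slip in the paper. However, your proposed repair does not close the gap either: Proposition~\ref{ppp}(iv) would let you descend from $M_{n}(\mathcal{A}^{\#})$ to the ideal $M_{n}(\mathcal{A})$ only if the latter has a quasi-central bounded approximate identity for the former, which amounts to $\mathcal{A}$ itself having a bounded approximate identity---an assumption nowhere in force (and one that fails, for instance, in the paper's own Example~2.5(i)). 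None of the other clauses of Proposition~\ref{ppp} goes in the right direction for this descent. Thus the non-unital step is left genuinely open both in your proposal and in the paper's proof; to complete it one would need a different argument, for example extending a cyclic derivation $D\colon M_{n}(\mathcal{A})\to M_{n}(\mathcal{A})^{*}$ directly to a cyclic derivation on $M_{n}(\mathcal{A}^{\#})$ using that the complement $M_{n}(\mathbb{C})$ is amenable, rather than trying to push approximate cyclic amenability down through the ideal.
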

\begin{proof}If $\mathcal{A}$ does not have an identity, according
to equality $M_{n}(\mathcal{A})^{\#}=M_{n}(\mathcal{A}^{\#})$ and
Proposition \ref{ropo}, we can assume that $\mathcal{A}$ has an identity.
Let $\mathcal{A}$ be an approximately cyclic amenable Banach
algebra and
 $D:M_{n}(\mathcal{A})\longrightarrow M_{n}(\mathcal{A})^{*}$ be a cyclic continuous
derivation. We regard $M_{n}$ as a subalgebra of $M_{n}(\mathcal{A})$.
Since $M_{n}$ is amenable, there exists an element
$\Lambda=(\lambda_{ij})\in M_{n}(\mathcal{A}^{*})$ with
$D|_{M_{n}}=ad_{\Lambda}$. Replacing D by $D-ad_{\Lambda}$, we
may suppose $D|_{M_{n}}=0$. For $r,s\in\Bbb N_{n}$ and $a\in \mathcal{A}$, let
$$D((a)_{rs})=(d_{ij}^{(r,s)}:i,j\in\Bbb N_{n})\in M_{n}(\mathcal{A}^{*}).$$

We have

$$D((a)_{rs})=D(\varepsilon_{r1}(a)_{11}\varepsilon_{1s})=\varepsilon_{r1}\cdot D((a)_{11})\cdot\varepsilon_{1s}.$$

Since $D(\varepsilon_{r1})=D(\varepsilon_{1s})=0$, by (\ref{ee1}) we
have $d_{ij}^{(r,s)}=0$ $(i,j \in \Bbb N_{n})$ except when
$(i,j)=(r,s)$ and in this case $d_{sr}^{(r,s)}=d_{11}^{(1,1)}$.
Putting $d_{11}^{(1,1)}=d(a)$, the map
$$d:\mathcal{A}\longrightarrow \mathcal{A}^{*}; a\mapsto d(a)$$

is a cyclic continuous derivation because for $a,b\in
\mathcal{A}$, by (\ref{ee1}) we get
$$D((ab)_{rs})=D((a)_{r1}\cdot (b)_{1s})=D((a)_{r1})\cdot (b)_{1s}+(a)_{r1}\cdot D((b)_{1s}).$$

This implies that $d(ab)=d(a)\cdot b+a\cdot d(b)$. On the other
hand,
$$\langle d(a),b\rangle+\langle d(b),a\rangle=\langle D((a)_{sr}),(b)_{rs}\rangle+\langle D((b)_{rs}),(a)_{sr}\rangle=0,$$

for all $a,b\in \mathcal{A}$. Due to approximate cyclic
amenability of $\mathcal{A}$, there exists a net
$(\lambda_{\alpha})\subseteq \mathcal{A}^{*}$ such that
$$d(a)=\lim_{\alpha}a\cdot\lambda_{\alpha}-\lambda_{\alpha}\cdot a\quad(a\in \mathcal{A}).$$

Take $\Lambda_{\alpha}\in M_{n}(\mathcal{A}^{*})$ to be the
matrix that has $\lambda_{\alpha}$ in each diagonal position and
zero elsewhere. Then by (\ref{ee1}) we see
$$D((a_{ij}))=\lim_{\alpha}(a_{ij})\cdot\Lambda_{\alpha}-\Lambda_{\alpha}\cdot (a_{ij})\quad\quad ((a_{ij})\in M_{n}(\mathcal{A}^{*})).$$

This shows that $M_{n}(\mathcal{A})$ is approximately
cyclic amenable.

Conversely,  suppose that $M_{n}(\mathcal{A})$ is approximately
cyclic amenable and $D:\mathcal{A}\longrightarrow
\mathcal{A}^{*}$ is a continuous cyclic derivation. It is easy to check
that $D\otimes1:\mathcal{A}\widehat{\otimes} M_{n}\longrightarrow
\mathcal{A}^{*}\widehat{\otimes} M_{n}=\mathcal{A}^{*}\otimes
M_{n}$ is a continuous cyclic derivation. Therefore there exists
a net $(a_{\alpha})\subseteq \mathcal{A}^{*}\otimes M_{n}$  such
that
$a_{\alpha}=\sum_{i,j=1}^{n}a_{ij}^{\alpha}\otimes\varepsilon_{ij}$
and
$$D\otimes1(\mathcal{B})=\lim_{\alpha}(\mathcal{B}\cdot a_{\alpha}-a_{\alpha}\cdot\mathcal{B}) \quad\quad \mathcal{B}\in \mathcal{A}\widehat{\otimes} M_{n}.$$

Thus for every $a\in \mathcal{A}$, we have
\begin{align*}
D(a)\otimes\varepsilon_{11}&=(D\otimes1)(a\otimes\varepsilon_{11})\\
&=\lim_{\alpha}((a\otimes\varepsilon_{11})\cdot a_{\alpha}-a_{\alpha}\cdot(a\otimes\varepsilon_{11}))\\
&=\lim_{\alpha}(\sum_{i=1}^{n}aa_{i1}^{\alpha}\otimes
\varepsilon_{i1}-\sum_{j=1}^{n}a_{1j}^{\alpha}a\otimes\varepsilon_{1j}).
\end{align*}

Hence

$$\left(
    \begin{array}{cccc}
      Da & 0 & \ldots &0 \\
      0 & 0 & \ldots & 0 \\
      \vdots & \vdots & & \vdots \\
      0 & 0 & \ldots & 0 \\
    \end{array}
  \right)
=\lim_{\alpha}(\left(
   \begin{array}{cccc}
     aa_{11}^{\alpha} & 0 & \ldots & 0 \\
     aa_{21}^{\alpha} & 0 & \ldots & 0 \\
     \vdots & \vdots&  & \vdots \\
     aa_{n1}^{\alpha} & 0 & \ldots & 0 \\
   \end{array}
 \right)
-\left(
   \begin{array}{cccc}
     a_{11}^{\alpha}a & a_{12}^{\alpha}a & \ldots & a_{1n}^{\alpha}a \\
     0 & 0 & \ldots & 0\\
     \vdots& \vdots &  & \vdots \\
     0 & 0 & \ldots & 0 \\
   \end{array}
 \right))
,$$

The above equality implies that
$D(a)=\lim_{\alpha}aa_{11}^{\alpha}-a_{11}^{\alpha}a$. Therefore
$\mathcal{A}$ is approximately cyclic
amenable.
\end{proof}


\section{Approximate cyclic amenability of second dual}

For a Banach algebra $\mathcal{A}$ let $\mathcal{A}^{op}$ be a Banach algebra, whose underlying Banach space is $\mathcal{A}$, but the product is $\circ$, where $a\circ b=ba$ in which $a,b\in \mathcal{A}$.

\begin{lemma}\label{lee}
Let $\mathcal{A}$ be a Banach algebra.
Then $\mathcal{A}$ is approximately cyclic amenable if and only if
$\mathcal{A}^{op}$ is approximately cyclic amenable.
\end{lemma}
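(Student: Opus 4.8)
The plan is to exploit the fact that passing from $\mathcal{A}$ to $\mathcal{A}^{op}$ interchanges the left and right dual module actions while leaving the underlying pairing untouched. First I would record, directly from the definitions, that for $a^*\in\mathcal{A}^*$ and $a\in\mathcal{A}$ the actions of $\mathcal{A}^{op}$ on $(\mathcal{A}^{op})^*=\mathcal{A}^*$ are $a\cdot_{op}a^*=a^*\cdot a$ and $a^*\cdot_{op}a=a\cdot a^*$, where $\cdot$ denotes the actions coming from $\mathcal{A}$. This is the only genuinely computational ingredient, and it is immediate from $a\circ b=ba$ together with $\langle a\cdot a^*,b\rangle=\langle a^*,ba\rangle$ and $\langle a^*\cdot a,b\rangle=\langle a^*,ab\rangle$.

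By symmetry it suffices to prove one implication, since $(\mathcal{A}^{op})^{op}=\mathcal{A}$; applying that implication to $\mathcal{A}^{op}$ then yields the converse. So I would assume $\mathcal{A}$ is approximately cyclic amenable and let $D:\mathcal{A}^{op}\longrightarrow(\mathcal{A}^{op})^*$ be a cyclic continuous derivation, and then show that the \emph{same} map $D$, now regarded as a map $\mathcal{A}\longrightarrow\mathcal{A}^*$, is again a cyclic continuous derivation. For the Leibniz identity, substituting the action formulas above into $D(a\circ b)=D(a)\cdot_{op}b+a\cdot_{op}D(b)$ gives $D(ba)=b\cdot D(a)+D(b)\cdot a$, which is precisely the derivation rule for $D$ on $\mathcal{A}$ after renaming the variables. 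The cyclic condition $\langle Da,b\rangle+\langle Db,a\rangle=0$ refers only to the pairing, so it transfers verbatim.

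Now approximate cyclic amenability of $\mathcal{A}$ yields a net $(a^*_\alpha)\subseteq\mathcal{A}^*$ with $D(a)=\lim_\alpha(a\cdot a^*_\alpha-a^*_\alpha\cdot a)$ for all $a\in\mathcal{A}$. To conclude that $D$ is approximately inner as a derivation on $\mathcal{A}^{op}$, I would feed the net $(-a^*_\alpha)$ back through the action formulas: since $a\cdot_{op}(-a^*_\alpha)-(-a^*_\alpha)\cdot_{op}a$ equals $a\cdot a^*_\alpha-a^*_\alpha\cdot a$, by the interchange of sides together with the compensating sign, this net implements $D$ approximately in $\mathcal{A}^{op}$. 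Hence $\mathcal{A}^{op}$ is approximately cyclic amenable.

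I do not anticipate a serious obstacle, as the argument is essentially bookkeeping. The one point that requires care is keeping the interchange of the two module actions and the sign straight, so that the approximating net is transported correctly; once the formulas $a\cdot_{op}a^*=a^*\cdot a$ and $a^*\cdot_{op}a=a\cdot a^*$ are in hand, everything else follows mechanically, and the symmetry $(\mathcal{A}^{op})^{op}=\mathcal{A}$ removes the need to treat the converse separately.
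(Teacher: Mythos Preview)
Your proof is correct and follows essentially the same route as the paper's: both exploit that the identity map $\mathcal{A}\to\mathcal{A}^{op}$ is an anti-isomorphism, transport a cyclic derivation on $\mathcal{A}^{op}$ to one on $\mathcal{A}$, apply the hypothesis, and invoke $(\mathcal{A}^{op})^{op}=\mathcal{A}$ for the converse. If anything, you are more explicit than the paper about the interchange of module actions and the compensating sign when re-expressing the approximating net in the opposite structure.
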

\begin{proof}The identity map $i:\mathcal{A}\longrightarrow
\mathcal{A}^{op}$ is a continuous anti-isomorphism. Suppose that
the map $D:\mathcal{A}^{op}\longrightarrow \mathcal({A}^{op})^{*}$ is
a continuous cyclic derivation. It is easy to see that $i^{*}\circ
D\circ i:\mathcal{A}\longrightarrow \mathcal{A}^{*}$ is a cyclic
derivation. Since $\mathcal{A}$ is approximately cyclic amenable,
there exists a net $(x_{\alpha})\subseteq \mathcal{A}^{*}$ such
that
$$i^{*}\circ D\circ i(a)=\lim_{\alpha}(a\cdot x_{\alpha}-x_{\alpha}\cdot a)\quad (a\in \mathcal{A}).$$

Moreover $(i^{*})^{2}=I_{\mathcal({A}^{op})^{*}}$. Applying $i^{*}$ to both sides of the above equation, we have
$$D(a)=D(i(a))=\lim_{\alpha}(a\cdot x_{\alpha}-x_{\alpha}\cdot a)\quad(a\in \mathcal{A}).$$

This implies that $\mathcal{A}^{op}$ is approximately cyclic
amenable. Since $(\mathcal{A}^{op})^{op}=\mathcal{A}$, the proof
of converse is done similarly.
\end{proof}

We need the following result which has been
proven in \cite[Proposition 4.6]{ess}.

\begin{prop}\label{prp}
Let $\mathcal{A}$ be a Banach
algebra, $\mathcal{B}$ be a closed subalgebra of $\mathcal{A}$ and $\mathcal{I}$ be a closed
ideal are in $\mathcal{A}$ such that
$\mathcal{A}=\mathcal{B}\oplus \mathcal{I}$. If $\mathcal{A}$ is
approximately cyclic amenable, then so is $\mathcal{B}$.
\end{prop}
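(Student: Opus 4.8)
The plan is to exploit the two algebra morphisms that the splitting $\mathcal{A}=\mathcal{B}\oplus\mathcal{I}$ supplies: the inclusion $\iota\colon\mathcal{B}\hookrightarrow\mathcal{A}$ and the projection $\pi\colon\mathcal{A}\to\mathcal{B}$ along $\mathcal{I}$. Since $\mathcal{I}$ is a closed ideal and $\mathcal{B}$ a closed subalgebra, $\pi$ is a continuous algebra homomorphism, and clearly $\pi\circ\iota=\mathrm{id}_{\mathcal{B}}$. Dualizing, I obtain bounded maps $\pi^{*}\colon\mathcal{B}^{*}\to\mathcal{A}^{*}$ and $\iota^{*}\colon\mathcal{A}^{*}\to\mathcal{B}^{*}$ with $\iota^{*}\circ\pi^{*}=\mathrm{id}_{\mathcal{B}^{*}}$; these are the tools for lifting a derivation up to $\mathcal{A}$ and then restricting the implementing net back down to $\mathcal{B}$.

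Given a continuous cyclic derivation $d\colon\mathcal{B}\to\mathcal{B}^{*}$, I would set $D:=\pi^{*}\circ d\circ\pi\colon\mathcal{A}\to\mathcal{A}^{*}$ and check that $D$ is again a continuous cyclic derivation. Boundedness is immediate. For the Leibniz rule, a direct computation with the bimodule actions (using $\langle a\cdot f,c\rangle=\langle f,ca\rangle$ and $\langle f\cdot a,c\rangle=\langle f,ac\rangle$ together with multiplicativity of $\pi$) shows $D(a)\cdot a'=\pi^{*}(d(\pi a)\cdot\pi a')$ and $a\cdot D(a')=\pi^{*}(\pi a\cdot d(\pi a'))$, whence $D(aa')=D(a)\cdot a'+a\cdot D(a')$. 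For cyclicity, $\langle D(a),a'\rangle+\langle D(a'),a\rangle=\langle d(\pi a),\pi a'\rangle+\langle d(\pi a'),\pi a\rangle=0$ because $d$ is cyclic.

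Since $\mathcal{A}$ is approximately cyclic amenable, there is a net $(f_{\alpha})\subseteq\mathcal{A}^{*}$ with $D(a)=\lim_{\alpha}(a\cdot f_{\alpha}-f_{\alpha}\cdot a)$ for all $a\in\mathcal{A}$. I would then put $g_{\alpha}:=\iota^{*}f_{\alpha}=f_{\alpha}|_{\mathcal{B}}\in\mathcal{B}^{*}$ and apply the bounded map $\iota^{*}$. Using $\pi\iota=\mathrm{id}$ one gets $\iota^{*}D(\iota b)=d(b)$, while a short calculation on the module actions gives $\iota^{*}(\iota b\cdot f_{\alpha})=b\cdot g_{\alpha}$ and $\iota^{*}(f_{\alpha}\cdot\iota b)=g_{\alpha}\cdot b$ for $b\in\mathcal{B}$. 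As $\iota^{*}$ is continuous it commutes with the net limit, and therefore $d(b)=\lim_{\alpha}(b\cdot g_{\alpha}-g_{\alpha}\cdot b)$, so $d$ is approximately inner and $\mathcal{B}$ is approximately cyclic amenable.

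I expect the only real work to be the two bookkeeping verifications: that $D$ satisfies the Leibniz identity (which is where multiplicativity of $\pi$ and the precise convention for the dual bimodule actions both enter), and that $\iota^{*}$ carries the inner-type expressions $a\cdot f_{\alpha}-f_{\alpha}\cdot a$ over $\mathcal{A}$ to the corresponding expressions over $\mathcal{B}$. Both reduce to unwinding the pairing $\langle\cdot,\cdot\rangle$, so the genuine conceptual content is simply that the splitting furnishes a homomorphic projection together with a homomorphic section, which is exactly what makes the lift-then-restrict scheme close up.
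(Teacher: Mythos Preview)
Your argument is correct and is precisely the standard lift--then--restrict scheme. The paper does not prove this proposition directly (it cites \cite[Proposition~4.6]{ess}), but the very same computation appears verbatim in the paper's proof of Theorem~\ref{tht2}, which is the abstract version of your setup with $\varphi=\pi$ and $\psi=\iota$; so your approach coincides with the paper's.
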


Recall that a Banach algebra $\mathcal{A}$ is said to be a dual
Banach algebra if there is a closed submodule $\mathcal{A}_{*}$
of $\mathcal{A}^{*}$
 such that $\mathcal{A}=(\mathcal{A}_{*})^{*}.$

\begin{theorem}\label{tht1}
Let $\mathcal{A}$ be a dual Banach
algebra. If $(\mathcal{A}^{**}, \square)$ is approximately cyclic
amenable then $\mathcal{A}$ is approximately cyclic amenable.
\end{theorem}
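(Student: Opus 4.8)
The plan is to realize $(\mathcal{A}^{**},\square)$ as a topological direct sum of a closed subalgebra isomorphic to $\mathcal{A}$ and a closed two-sided ideal, and then to quote Proposition \ref{prp}. Since $\mathcal{A}$ is a dual Banach algebra, I would fix a closed submodule $\mathcal{A}_*\subseteq\mathcal{A}^*$ with $\mathcal{A}=(\mathcal{A}_*)^*$, write $a\mapsto\widehat{a}$ for the canonical embedding $\mathcal{A}\hookrightarrow\mathcal{A}^{**}$, and let $Q\colon\mathcal{A}^{**}\to\mathcal{A}$ be the restriction map to $\mathcal{A}_*$, i.e. the adjoint of the inclusion $\mathcal{A}_*\hookrightarrow\mathcal{A}^*$ composed with the identification $(\mathcal{A}_*)^*=\mathcal{A}$. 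The candidate decomposition is $\mathcal{A}^{**}=\widehat{\mathcal{A}}\oplus\mathcal{A}_*^{\perp}$, where $\mathcal{A}_*^{\perp}=\ker Q$.

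First I would check the splitting at the Banach-space level. For $a\in\mathcal{A}$ and $f\in\mathcal{A}_*$ one has $\langle Q\widehat{a},f\rangle=\langle\widehat{a},f\rangle=\langle f,a\rangle$, so $Q\widehat{a}=a$; hence the map $\Psi\mapsto\widehat{Q\Psi}$ is a bounded idempotent on $\mathcal{A}^{**}$ with range $\widehat{\mathcal{A}}$ and kernel $\mathcal{A}_*^{\perp}$, which yields $\mathcal{A}^{**}=\widehat{\mathcal{A}}\oplus\mathcal{A}_*^{\perp}$ (the intersection is trivial because $\mathcal{A}_*$ separates the points of $\mathcal{A}=(\mathcal{A}_*)^*$). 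The summand $\widehat{\mathcal{A}}$ is norm-closed and, since the canonical embedding is always a homomorphism into the first Arens algebra, it is a closed subalgebra of $(\mathcal{A}^{**},\square)$.

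The heart of the argument, and the step I expect to be the main obstacle, is to prove that $\mathcal{A}_*^{\perp}$ is a two-sided ideal for $\square$. The left-ideal inclusion is clean: for $\Phi\in\mathcal{A}_*^{\perp}$ and $f\in\mathcal{A}_*$ the submodule property gives $f\cdot a\in\mathcal{A}_*$, so $\langle\Phi\cdot f,a\rangle=\langle\Phi,f\cdot a\rangle=0$, that is $\Phi\cdot f=0$, whence $\langle\Psi\square\Phi,f\rangle=\langle\Psi,\Phi\cdot f\rangle=0$ for every $\Psi\in\mathcal{A}^{**}$. The right-ideal inclusion is more delicate, because the naive route (approximate $\Psi$ weak$^*$ by elements of $\mathcal{A}$ and pass to the limit) fails: $\mathcal{A}_*$ is weak$^*$-dense, not weak$^*$-closed, in $\mathcal{A}^*$. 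The correct observation is that $\Psi\cdot f$ depends on $\Psi$ only through $Q\Psi$: for $f\in\mathcal{A}_*$ one has $f\cdot a\in\mathcal{A}_*$, so $\langle\Psi\cdot f,a\rangle=\langle\Psi,f\cdot a\rangle=\langle Q\Psi,f\cdot a\rangle$, and unwinding the module actions this equals $\langle (Q\Psi)\cdot f,a\rangle$. Thus $\Psi\cdot f=(Q\Psi)\cdot f\in\mathcal{A}_*$ by the submodule property (as $Q\Psi\in\mathcal{A}$), and therefore $\langle\Phi\square\Psi,f\rangle=\langle\Phi,\Psi\cdot f\rangle=0$ for every $\Phi\in\mathcal{A}_*^{\perp}$.

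With $\mathcal{A}^{**}=\widehat{\mathcal{A}}\oplus\mathcal{A}_*^{\perp}$ exhibited as the direct sum of the closed subalgebra $\widehat{\mathcal{A}}$ and the closed ideal $\mathcal{A}_*^{\perp}$, Proposition \ref{prp} applies to the approximately cyclic amenable algebra $(\mathcal{A}^{**},\square)$ and gives that $\widehat{\mathcal{A}}$ is approximately cyclic amenable; since $\widehat{\mathcal{A}}\cong\mathcal{A}$ isometrically, $\mathcal{A}$ is approximately cyclic amenable. The companion statement for the second Arens product $\lozenge$ would then follow by the same scheme after passing to $\mathcal{A}^{op}$ via Lemma \ref{lee}.
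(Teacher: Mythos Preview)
Your proposal is correct and follows exactly the paper's approach: exhibit $(\mathcal{A}^{**},\square)=\widehat{\mathcal{A}}\oplus\mathcal{A}_*^{\perp}$ with $\mathcal{A}_*^{\perp}$ a closed two-sided ideal, and then invoke Proposition \ref{prp}. The only difference is that the paper outsources this decomposition to \cite[Theorem 2.15]{dal2}, whereas you supply a self-contained verification of the ideal property.
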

\begin{proof}
According to \cite[Theorem 2.15]{dal2},
$(\mathcal{A}_{*})^{\bot}$ is a $\omega^{*}$-closed ideal in
$\mathcal{A}^{**}$ and
 $\mathcal{A}^{**}=\mathcal{A}\oplus(\mathcal{A}_{*})^{\bot}$. Since $\mathcal{A}^{**}$ is approximately cyclic amenable,
  $\mathcal{A}$ is approximately cyclic
  amenable by Proposition \ref{prp}.
\end{proof}
 We have shown in Propositions \ref{ppp} (ii) and  \ref{prp} that under certain conditions, the homomorphic image of an approximately cyclic amenable Banach algebra is again an approximately cyclic amenable. In the upcoming theorem,  we extend Proposition \ref{prp} by using homomorphisms.

\begin{theorem}\label{tht2}Let $\mathcal{A}$ and $\mathcal{B}$ be
Banach algebras. If $\varphi:\mathcal{A}\longrightarrow
\mathcal{B}$ and $\psi:\mathcal{B}\longrightarrow \mathcal{A}$
are continuous homomorphisms such that
$\varphi\circ\psi=I_{\mathcal{B}}$, then
\begin{enumerate}
\item[\emph{(i)}] {If $\mathcal{A}$ is approximately cyclic amenable, then so is $\mathcal{B}$;}
\item[\emph{(ii)}] {If $(\mathcal{A}^{**}, \square)$ is approximately cyclic amenable, then so is $(\mathcal{B}^{**}, \square)$.}
\end{enumerate}
\end{theorem}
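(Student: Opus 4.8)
The plan is to reduce part (ii) to part (i) by passing to second duals, so the real content lies in part (i). For (i) the strategy is to transport a cyclic derivation on $\mathcal{B}$ to one on $\mathcal{A}$ through $\varphi$, invoke approximate cyclic amenability of $\mathcal{A}$, and then carry the resulting approximately inner expression back to $\mathcal{B}$ through $\psi$, using the section relation $\varphi\circ\psi=I_{\mathcal{B}}$ in its dual form $\psi^{*}\circ\varphi^{*}=I_{\mathcal{B}^{*}}$. Concretely, I would start with an arbitrary continuous cyclic derivation $D:\mathcal{B}\to\mathcal{B}^{*}$ and set $\widetilde{D}=\varphi^{*}\circ D\circ\varphi:\mathcal{A}\to\mathcal{A}^{*}$. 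The first task is to verify that $\widetilde{D}$ is again a continuous cyclic derivation. The cleanest bookkeeping is to endow $\mathcal{B}^{*}$ with the pulled-back $\mathcal{A}$-bimodule actions $a\cdot f=\varphi(a)\cdot f$ and $f\cdot a=f\cdot\varphi(a)$; then $D\circ\varphi$ is a derivation into this bimodule since $\varphi$ is a homomorphism, and a short duality computation shows $\varphi^{*}:\mathcal{B}^{*}\to\mathcal{A}^{*}$ is an $\mathcal{A}$-bimodule homomorphism from this pulled-back module into the natural dual module $\mathcal{A}^{*}$. Composing yields that $\widetilde{D}$ is a derivation, and cyclicity is immediate from the identity $\langle\widetilde{D}(a),b\rangle=\langle D(\varphi(a)),\varphi(b)\rangle$ together with cyclicity of $D$.

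Next, approximate cyclic amenability of $\mathcal{A}$ supplies a net $(a^{*}_{\alpha})\subseteq\mathcal{A}^{*}$ with $\widetilde{D}(a)=\lim_{\alpha}(a\cdot a^{*}_{\alpha}-a^{*}_{\alpha}\cdot a)$ for all $a\in\mathcal{A}$. Now I would invoke the section relation: from $\varphi\circ\psi=I_{\mathcal{B}}$ we get $\psi^{*}\circ\varphi^{*}=I_{\mathcal{B}^{*}}$, hence $\psi^{*}(\widetilde{D}(\psi(b)))=\psi^{*}\varphi^{*}D(\varphi(\psi(b)))=D(b)$ for every $b\in\mathcal{B}$. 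Applying the continuous map $\psi^{*}$ to the net expression, and using that $\psi^{*}$ intertwines the module actions (the same duality computation as for $\varphi^{*}$, with the roles of the two algebras reversed), I obtain $D(b)=\lim_{\alpha}(b\cdot b^{*}_{\alpha}-b^{*}_{\alpha}\cdot b)$ where $b^{*}_{\alpha}=\psi^{*}(a^{*}_{\alpha})\in\mathcal{B}^{*}$. This exhibits $D$ as approximately inner, which proves (i).

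For (ii) I would simply pass to second duals and apply (i). It is classical that the second adjoint of a continuous homomorphism is again a continuous homomorphism for the first Arens product, so $\varphi^{**}:(\mathcal{A}^{**},\square)\to(\mathcal{B}^{**},\square)$ and $\psi^{**}:(\mathcal{B}^{**},\square)\to(\mathcal{A}^{**},\square)$ are homomorphisms; and since the double adjoint is functorial, $\varphi^{**}\circ\psi^{**}=(\varphi\circ\psi)^{**}=(I_{\mathcal{B}})^{**}=I_{\mathcal{B}^{**}}$. Applying part (i) to the pair $(\mathcal{A}^{**},\square)$ and $(\mathcal{B}^{**},\square)$ with the homomorphisms $\varphi^{**}$ and $\psi^{**}$ then gives the claim.

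The routine but fiddly core of the argument is the pair of duality identities establishing that $\varphi^{*}$ and $\psi^{*}$ are bimodule maps for the pulled-back actions; everything else is formal functoriality. In part (ii) the only nontrivial input is the standard fact that the bidual of a homomorphism respects the first Arens product, which can be cited from \cite{dal1}.
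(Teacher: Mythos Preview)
Your proposal is correct and follows essentially the same route as the paper: for (i) you form $\widetilde{D}=\varphi^{*}\circ D\circ\varphi$, verify it is a cyclic derivation using that $\varphi^{*}$ is an $\mathcal{A}$-bimodule map, apply approximate cyclic amenability of $\mathcal{A}$, and push the approximating net back through $\psi^{*}$ using $\psi^{*}\circ\varphi^{*}=I_{\mathcal{B}^{*}}$ and the bimodule property of $\psi^{*}$; for (ii) you reduce to (i) via $\varphi^{**}\circ\psi^{**}=I_{\mathcal{B}^{**}}$, exactly as the paper does. Your write-up is slightly more explicit about the pulled-back module structures, but there is no substantive difference.
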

\begin{proof}(i)  Let $D:\mathcal{B}\longrightarrow
\mathcal{B}^{*}$ be a cyclic derivation. The map  $\varphi^{*}$ is
an $\mathcal{A}$-module homomorphism, and so

\begin{align*}
\varphi^{*}\circ D\circ\varphi(ab) &=\varphi^{*}(D(\varphi(a))\cdot\varphi(b)+\varphi(a)\cdot D(\varphi(b)))\\
&=\varphi^{*}\circ D\circ\varphi(a)\cdot b+a\cdot\varphi^{*}\circ
D\circ\varphi(b),
\end{align*}
for all $a,b\in \mathcal{A}$. Hence, $\varphi^{*}\circ
D\circ\varphi:\mathcal{A}\longrightarrow \mathcal{A}^{*}$ is a
continuous derivation. Moreover, $\varphi^{*}\circ D\circ\varphi$
is cyclic, since $D$ is cyclic. Therefore there exists a net
$(a_{\alpha}^{*})\subseteq \mathcal{A}^{*}$ such that
$\varphi^{*}\circ D\circ\varphi(a)=\lim_{\alpha}(a\cdot
a_{\alpha}^{*}-a_{\alpha}^{*}\cdot a)\quad(a\in \mathcal{A}).$ The
equality $\varphi\circ\psi=I_{\mathcal{B}}$ implies
$\psi^{*}\circ\varphi^{*}=I_{\mathcal{B}^{*}}$. For every
$c\in \mathcal{B}$, we get

\begin{align*}
D(c) &=\psi^{*}\circ\varphi^{*}\circ D\circ \varphi\circ\psi(c)\\
&=\psi^{*}(\varphi^{*}\circ D\circ\varphi(\psi(c))\\
&=\psi^{*}(\lim_{\alpha}(\psi(c)\cdot a_{\alpha}^{*}-a_{\alpha}^{*}\cdot \psi(c)))\\
&=\lim_{\alpha}\psi^{*}(\psi(c)\cdot a_{\alpha}^{*}-a_{\alpha}^{*}\cdot\psi(c))\\
&=\lim_{\alpha}(c\cdot
\psi^{*}(a_{\alpha}^{*})-\psi^{*}(a_{\alpha}^{*})\cdot c).
\end{align*}

It follows that $\mathcal{B}$ is approximately cyclic amenable.

(ii) Since $\varphi^{**}\circ\psi^{**}=I_{\mathcal{B}^{**}}$,
the proof is similar to (i). 
\end{proof}

\begin{theorem}\label{corf}
Let $\mathcal{A}$ and $\mathcal{B}$ be
Banach algebras. Assume that the charater spaces of $\mathcal{A}$ and $\mathcal{B}$ are non empty.
\begin{enumerate}
\item[\emph{(i)}] {If
$\mathcal{A}\widehat{\otimes}\mathcal{B}$ is approximately cyclic
amenable, then so are $\mathcal{A}$ and $\mathcal{B}$;}
\item[\emph{(ii)}] {If $\mathcal{A}$ is commutative and
$\mathcal{A}\widehat{\otimes}\mathcal{A}$ is approximately cyclic
amenable, then $\mathcal{A}$ is approximately cyclic amenable.}
\end{enumerate}
\end{theorem}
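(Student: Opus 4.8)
The plan is to obtain a given cyclic derivation on $\mathcal{A}$ (resp. $\mathcal{B}$) as the ``shadow'' of an inflated cyclic derivation on $\mathcal{A}\widehat{\otimes}\mathcal{B}$, using the characters to split off one tensor factor. Part (ii) is the special case $\mathcal{B}=\mathcal{A}$ of part (i), so I would really only construct the argument once. It is worth noting that when $\mathcal{A}$ is commutative the product map $\pi\colon\mathcal{A}\widehat{\otimes}\mathcal{A}\to\mathcal{A}$, $\pi(a\otimes b)=ab$, is an algebra homomorphism, and in the unital case $a\mapsto a\otimes 1$ is a homomorphic section with $\pi\circ(a\mapsto a\otimes1)=\mathrm{id}$, so Theorem \ref{tht2}(i) transfers approximate cyclic amenability from $\mathcal{A}\widehat{\otimes}\mathcal{A}$ down to $\mathcal{A}$; this is presumably why commutativity is singled out in (ii). For the general two-factor statement (i), however, no such product map is available and there is no obvious homomorphic section $\mathcal{A}\to\mathcal{A}\widehat{\otimes}\mathcal{B}$ (one would need an idempotent $b_0\in\mathcal{B}$ with $\varphi_B(b_0)=1$), so I would argue directly at the level of derivations, which in fact needs neither commutativity nor unitality.

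Fix $\varphi_B\in\Phi_{\mathcal{B}}$ and $b_0\in\mathcal{B}$ with $\varphi_B(b_0)=1$. Given a cyclic derivation $D\colon\mathcal{A}\to\mathcal{A}^*$, I would define $\tilde{D}\colon\mathcal{A}\widehat{\otimes}\mathcal{B}\to(\mathcal{A}\widehat{\otimes}\mathcal{B})^*$ on elementary tensors by $\tilde{D}(a\otimes b)=\varphi_B(b)\,D(a)\otimes\varphi_B$. Since $(a,b)\mapsto\varphi_B(b)\,D(a)\otimes\varphi_B$ is bounded and bilinear, $\tilde{D}$ is well defined on the completed projective tensor product by its universal property. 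A short computation on elementary tensors, using $\varphi_B(bb')=\varphi_B(b)\varphi_B(b')$ together with the module identity $(a'\otimes b')\cdot(a^*\otimes\varphi_B)=\varphi_B(b')\,(a'\cdot a^*)\otimes\varphi_B$ and its right-hand analogue, shows that $\tilde{D}$ is a bounded derivation; it is cyclic because $\langle\tilde{D}(a\otimes b),a'\otimes b'\rangle+\langle\tilde{D}(a'\otimes b'),a\otimes b\rangle=\varphi_B(b)\varphi_B(b')\bigl(\langle Da,a'\rangle+\langle Da',a\rangle\bigr)=0$. By hypothesis there is a net $(T_\alpha)\subseteq(\mathcal{A}\widehat{\otimes}\mathcal{B})^*$ with $\tilde{D}(u)=\lim_\alpha\bigl(u\cdot T_\alpha-T_\alpha\cdot u\bigr)$.

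To descend I would pull $T_\alpha$ back to $\mathcal{A}^*$ by evaluating the $\mathcal{B}$-variable. The subtlety is that the module commutator on $\mathcal{A}\widehat{\otimes}\mathcal{B}$ multiplies the $\mathcal{B}$-entries and so turns $b_0$ into $b_0^2$; accordingly I would set $\langle a_\alpha^*,a\rangle=\langle T_\alpha,a\otimes b_0^2\rangle$ and let $P$ be the evaluation $\langle P(S),a\rangle=\langle S,a\otimes b_0\rangle$. Then for every $\alpha$ and all $a,a'\in\mathcal{A}$ one checks the exact identity $a\cdot a_\alpha^*-a_\alpha^*\cdot a=P\bigl((a\otimes b_0)\cdot T_\alpha-T_\alpha\cdot(a\otimes b_0)\bigr)$, since both sides send $a'$ to $\langle T_\alpha,(a'a-aa')\otimes b_0^2\rangle$. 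As $P$ is bounded, applying it to the limit defining $\tilde{D}$ yields $\lim_\alpha(a\cdot a_\alpha^*-a_\alpha^*\cdot a)=P\bigl(\tilde{D}(a\otimes b_0)\bigr)=D(a)$, the last equality because $\varphi_B(b_0)=1$. Hence $D$ is approximately inner, so $\mathcal{A}$ is approximately cyclic amenable; interchanging the roles of the two algebras, with $\varphi_A\in\Phi_{\mathcal{A}}$ and $a_0$ satisfying $\varphi_A(a_0)=1$, gives the statement for $\mathcal{B}$, and (ii) follows on taking $\mathcal{B}=\mathcal{A}$.

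The step I expect to be the main obstacle is exactly this descent. The naive pullback through $b_0$ does not implement $D$, because the module action on the tensor product introduces an extra factor; the fix is to define the implementing functional through $b_0^2$ while projecting with $b_0$, and to verify that the matching identity holds \emph{exactly} for each $\alpha$ (not merely in the limit) so that the norm limit may be carried through the bounded map $P$. The secondary point needing care is confirming that $\tilde{D}$ is a genuine cyclic derivation on the completed projective tensor product, i.e.\ that the Leibniz and cyclicity identities, verified on elementary tensors, extend by bilinearity and continuity.
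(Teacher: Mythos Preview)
Your argument is correct. The paper, however, takes a shorter route: it first invokes Proposition~\ref{ropo} to assume without loss of generality that $\mathcal{A}$ and $\mathcal{B}$ are unital, and then applies Theorem~\ref{tht2}(i) directly with the explicit homomorphisms $\varphi\colon\mathcal{A}\widehat{\otimes}\mathcal{B}\to\mathcal{A}$, $a\otimes b\mapsto\phi(b)a$ (for a fixed $\phi\in\Phi_{\mathcal{B}}$) and $\psi\colon\mathcal{A}\to\mathcal{A}\widehat{\otimes}\mathcal{B}$, $a\mapsto a\otimes e_{\mathcal{B}}$, which satisfy $\varphi\circ\psi=I_{\mathcal{A}}$. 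Your lifted derivation $\tilde D$ is precisely the map $\varphi^{*}\circ D\circ\varphi$ that appears inside the proof of Theorem~\ref{tht2}, so at heart the two arguments coincide; the genuine difference is in the descent. The paper descends via $\psi^{*}$, which requires the identity $e_{\mathcal{B}}$ and hence the unitization step, whereas your $b_0/b_0^{2}$ trick provides a non-homomorphic substitute that works without any reduction to the unital case. Your worry that ``there is no obvious homomorphic section'' is thus exactly what the paper's appeal to Proposition~\ref{ropo} is designed to circumvent. For part~(ii) the paper gives an independent argument using the multiplication map $a\otimes b\mapsto ab$ (which is a homomorphism only when $\mathcal{A}$ is commutative) and the section $a\mapsto a\otimes e_{\mathcal{A}}$; as you observe, (ii) already follows from (i) once the character hypothesis is in place, so commutativity is not actually needed there.
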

\begin{proof}By Proposition \ref{ropo} we may suppose that
$\mathcal{A}$ and $\mathcal{B}$ are unital with identities
$e_{\mathcal{A}}$ and $e_{\mathcal{B}}$.\\ (i)  It is obvious
that the map
$\varphi:\mathcal{A}\widehat{\otimes}\mathcal{B}\longrightarrow
\mathcal{A}$: $a\otimes b\longmapsto \phi(b)a$ is a continuous
linear map, where $\phi\in \Phi_{\mathcal{B}}$. Take $u,v\in
\mathcal{A}\widehat{\otimes}\mathcal{B}$ such that
$u=\sum_{n=1}^{\infty}a_{n}\otimes b_{n}$ and
$v=\sum_{n=1}^{\infty}c_{m}\otimes d_{m}$\quad
$(a_{n},c_{m}\in\mathcal{A}, b_{n},d_{m}\in \mathcal{B})$. Hence,
\begin{align*}
\varphi(uv) &=\varphi(\sum_{n,m=1}^{\infty}a_{n}c_{m}\otimes
b_{n}d_{m})\\
&=\sum_{n,m=1}^{\infty}\phi(b_{n}d_{m})a_{n}c_{m}=\sum_{n,m=1}^{\infty}\phi(b_{n})\phi(d_{m})a_{n}c_{m}\\
&=(\sum_{n=1}^{\infty}\phi(b_{n})a_{n})(\sum_{m=1}^{\infty}\phi(d_{m})c_{m})=
\varphi(u)\varphi(v).
\end{align*}

So, $\varphi$ is a continuous homomorphism. Moreover, the map
$\psi:\mathcal{A}\longrightarrow
\mathcal{A}\widehat{\otimes}\mathcal{B}$\quad $a\longmapsto
a\otimes e_{\mathcal{B}}$ is continuous homomorphism such that
$\varphi\circ\psi=I_{\mathcal{A}}$. As a result of Theorem \ref{tht2} (i), $\mathcal{A}$ is approximately cyclic amenable and likewise $\mathcal{B}$ is approximately cyclic
amenable.

(ii) Consider the homomorphisms
$\varphi:\mathcal{A}\widehat{\otimes}\mathcal{A}\longrightarrow
\mathcal{A}$: $a\widehat{\otimes}b\longmapsto ab$ and
$\psi:\mathcal{A}\longrightarrow
\mathcal{A}\widehat{\otimes}\mathcal{A}$: $a\longmapsto a\otimes
e_{\mathcal{A}}$. Thus $\varphi\circ\psi=I_{\mathcal{A}}$. Now,
Theorem \ref{tht2} (i) shows that $\mathcal{A}$ is approximately cyclic
amenable.
\end{proof}

In the rest of the paper, we investigate conditions under which approximate cyclic amenability of $\mathcal{A}^{**}$
necessitates approximate cyclic amenability of  $\mathcal{A}$.

Let $\mathcal{A}$ be a Banach algebra. The space of almost periodic functionals on
$\mathcal{A}$ is defined by
$$WAP(A)=\{a^*\in \mathcal{A}^{*}:a\longmapsto a\cdot a^*; \,\, \mathcal{A}\longrightarrow \mathcal{A}^{*}\,  \text{is weakly compact}\}.$$
Also, the topological center $Z(\mathcal{A}^{**})$ of
$\mathcal{A}^{**}$ is defined by
$$Z(\mathcal{A}^{**})=\{b^{**} : \text{The map }a^{**}
\mapsto b^{**}\square a^{**} \hspace{0.2cm} \text{is}
\hspace{0.2cm}
 \omega^{*}-\omega^{*}\text{-continuous} \}.$$

\begin{theorem}\label{t11}
Let $\mathcal{A}$ be a Banach algebra.

\begin{enumerate}
\item[\emph{(i)}] {Suppose $(\mathcal{A}^{**}, \square)$ is approximately cyclic amenable and every cyclic derivation $D:\mathcal{A}\longrightarrow\mathcal{A}^{*}$
satisfies $D^{**}(\mathcal{A}^{**})\subseteq WAP(\mathcal{A})$.
Then $\mathcal{A}$ is approximately cyclic amenable;}
\item[\emph{(ii)}] {Suppose $\mathcal{A}$ is Arens regular, every
derivation $D:\mathcal{A}\longrightarrow \mathcal{A}^{*}$ is
weakly compact and $\mathcal{A}^{**}$ is approximately cyclic amenable.
Then $\mathcal{A}$ is approximately cyclic amenable.}
\end{enumerate}
\end{theorem}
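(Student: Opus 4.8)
The plan is to lift a cyclic derivation on $\mathcal{A}$ to one on $\mathcal{A}^{**}$, to solve it there using the hypothesis, and then to push the solution back down to $\mathcal{A}$ through the canonical projection. Let $\kappa_{\mathcal{A}}:\mathcal{A}\to\mathcal{A}^{**}$ and $\kappa_{\mathcal{A}^{*}}:\mathcal{A}^{*}\to\mathcal{A}^{***}$ be the canonical embeddings and let $P=\kappa_{\mathcal{A}}^{*}:\mathcal{A}^{***}\to\mathcal{A}^{*}$ be the adjoint projection, so that $P\circ\kappa_{\mathcal{A}^{*}}=I_{\mathcal{A}^{*}}$. Two general facts will be used repeatedly: for any continuous derivation $D:\mathcal{A}\to\mathcal{A}^{*}$ one has $D^{**}\circ\kappa_{\mathcal{A}}=\kappa_{\mathcal{A}^{*}}\circ D$, and $P$ is an $\mathcal{A}$-bimodule morphism when $\mathcal{A}$ acts on $\mathcal{A}^{***}=(\mathcal{A}^{**})^{*}$ through $\kappa_{\mathcal{A}}$ and the first Arens product, a short computation giving $P(\kappa_{\mathcal{A}}(a)\cdot\Lambda)=a\cdot P(\Lambda)$ and $P(\Lambda\cdot\kappa_{\mathcal{A}}(a))=P(\Lambda)\cdot a$ for $a\in\mathcal{A}$, $\Lambda\in\mathcal{A}^{***}$.

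For part (i), I would start with an arbitrary cyclic derivation $D:\mathcal{A}\to\mathcal{A}^{*}$ and show that its second transpose $D^{**}:(\mathcal{A}^{**},\square)\to\mathcal{A}^{***}$ is again a cyclic continuous derivation into the dual module $\mathcal{A}^{***}$. Here the hypothesis $D^{**}(\mathcal{A}^{**})\subseteq WAP(\mathcal{A})$ is exactly what is needed: writing $m=\mathrm{w}^{*}\text{-}\lim_{\alpha}\kappa_{\mathcal{A}}(a_{\alpha})$ and $n=\mathrm{w}^{*}\text{-}\lim_{\beta}\kappa_{\mathcal{A}}(b_{\beta})$ via Goldstine's theorem, and using that $D^{**}$ is weak${}^{*}$--weak${}^{*}$ continuous, both the Leibniz identity $D^{**}(m\square n)=D^{**}(m)\cdot n+m\cdot D^{**}(n)$ and the cyclic identity $\langle D^{**}(m),n\rangle+\langle D^{**}(n),m\rangle=0$ reduce to the assertion that the two iterated limits $\lim_{\alpha}\lim_{\beta}\langle Da_{\alpha},b_{\beta}\rangle$ and $\lim_{\beta}\lim_{\alpha}\langle Da_{\alpha},b_{\beta}\rangle$ coincide. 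The membership $D^{**}(\mathcal{A}^{**})\subseteq WAP(\mathcal{A})$ guarantees precisely this interchange of weak${}^{*}$ limits, the cyclicity of $D$ supplying the sign $\langle Db_{\beta},a_{\alpha}\rangle=-\langle Da_{\alpha},b_{\beta}\rangle$ that makes the cyclic identity vanish.

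Once $D^{**}$ is known to be a cyclic continuous derivation, the approximate cyclic amenability of $(\mathcal{A}^{**},\square)$ yields a net $(\Lambda_{\gamma})\subseteq\mathcal{A}^{***}$ with $D^{**}(m)=\lim_{\gamma}(m\cdot\Lambda_{\gamma}-\Lambda_{\gamma}\cdot m)$ for all $m\in\mathcal{A}^{**}$. Taking $m=\kappa_{\mathcal{A}}(a)$, applying the bounded map $P$ (which commutes with the norm limit), and invoking $P\circ D^{**}\circ\kappa_{\mathcal{A}}=P\circ\kappa_{\mathcal{A}^{*}}\circ D=D$ together with the bimodule-morphism identities for $P$, I obtain $D(a)=\lim_{\gamma}(a\cdot P\Lambda_{\gamma}-P\Lambda_{\gamma}\cdot a)$ with $P\Lambda_{\gamma}\in\mathcal{A}^{*}$. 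Hence $D$ is approximately inner and $\mathcal{A}$ is approximately cyclic amenable.

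Finally, part (ii) should follow by reducing it to part (i). Arens regularity of $\mathcal{A}$ gives $WAP(\mathcal{A})=\mathcal{A}^{*}$ and removes the ambiguity between the two Arens products, while weak compactness of every derivation $D:\mathcal{A}\to\mathcal{A}^{*}$ forces $D^{**}(\mathcal{A}^{**})\subseteq\kappa_{\mathcal{A}^{*}}(\mathcal{A}^{*})$ by the standard range characterization of weakly compact operators. Together these yield $D^{**}(\mathcal{A}^{**})\subseteq WAP(\mathcal{A})$, so the hypotheses of (i) are satisfied and the conclusion follows at once. The main obstacle throughout is the opening step of (i): verifying carefully that $D^{**}$ is genuinely a \emph{cyclic derivation} for the first Arens product, since this is the point where the delicate interchange of weak${}^{*}$ limits --- controlled solely by the $WAP$ hypothesis --- must be justified.
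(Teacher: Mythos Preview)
Your proposal is correct and follows essentially the same route as the paper: lift a cyclic derivation $D$ to $D^{**}$, use the $WAP$ hypothesis to interchange iterated weak$^{*}$ limits so that $D^{**}$ is a cyclic derivation on $(\mathcal{A}^{**},\square)$, apply approximate cyclic amenability there, and push the implementing net back through the canonical projection $P:\mathcal{A}^{***}\to\mathcal{A}^{*}$. The only cosmetic difference is that the paper outsources the verification that $D^{**}$ is a derivation to external references (\cite{esh} in (i), \cite{dal4} in (ii)), whereas you sketch the limit-interchange argument directly; your reduction of (ii) to (i) via $WAP(\mathcal{A})=\mathcal{A}^{*}$ under Arens regularity is a clean way to organize what the paper does in parallel.
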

\begin{proof}
(i) Let $D:\mathcal{A}\longrightarrow \mathcal{A}^{*}$ be a cyclic derivation. It follows from \cite[Theorem 2.1]{esh} that
$D^{**}:\mathcal{A}^{**}\longrightarrow \mathcal{A}^{***}$ is a
derivation. For every $a^{**},b^{**}\in \mathcal{A}^{**}$ take
two bounded nets $(a_{\alpha}),(b_{\beta})\subseteq \mathcal{A}$
such that $a^{**}=\omega^{*}-\lim_{\alpha}a_{\alpha}$ and
$b^{**}=\omega^{*}-\lim_{\beta}b_{\beta}$. Hence, $\langle
D^{**}(a^{**}),b^{**}\rangle=\lim_{\alpha}\lim_{\beta}\langle
D(a_{\alpha}),b_{\beta}\rangle$ and since
$D^{**}(a^{**})\subseteq WAP(\mathcal{A})\subseteq
\mathcal{A}^{*}$, $\langle
D^{**}(b^{**}),a^{**}\rangle=\lim_{\alpha}\lim_{\beta}\langle
D(b_{\beta}),a_{\alpha}\rangle$. The above equalities show that
$$\langle D^{**}(a^{**}),b^{**}\rangle+\langle D^{**}(b^{**}),a^{**}\rangle=\lim_{\alpha}\lim_{\beta}(\langle D(a_{\alpha}),b_{\beta}\rangle+\langle Db_{\beta},a_{\alpha}\rangle)=0.$$

Thus $D^{**}$ is cyclic, and so there exists a net
$(a_{\alpha}^{***})\subseteq \mathcal{A}^{***}$ such that
$$D^{**}(a^{**})=\lim_{\alpha}(a^{**}\cdot a_{\alpha}^{***}-a_{\alpha}^{***}\cdot a^{**})\quad(a^{**}\in \mathcal{A}^{**}).$$

On the other hand,
$$D(a)=P(D(a))=\lim_{\alpha}(a\cdot P(a_{\alpha}^{***})-P(a_{\alpha}^{***})\cdot a)\quad (a\in \mathcal{A}),$$
where $P:\mathcal{A}^{***}\longrightarrow \mathcal{A}^{*}$ is the
natural projection. Therefore $D$ is approximately inner and thus
$\mathcal{A}$ is approximately cyclic amenable.

(ii) Suppose $D:\mathcal{A}\longrightarrow \mathcal{A}^{*}$ is
a cyclic derivation. Since $D$ is weakly compact,
$D^{**}(\mathcal{A}^{**})\subseteq \mathcal{A}^{*}$ and by
\cite[Corollary 7.2(i)]{dal4},
$D^{**}:\mathcal{A}^{**}\longrightarrow \mathcal{A}^{***}$ is a
derivation. So similar to part (i), $D^{**}$ is cyclic and
approximately inner. Therefore, we obtain the desired
result.
\end{proof}

\begin{theorem}\label{t22}
Let $\mathcal{A}$ be a Banach algebra
and every cyclic derivation $D:\mathcal{A}\longrightarrow
\mathcal{A}^{*}$ is weakly compact. If the topological center $Z(\mathcal{A}^{**})$ is
approximately cyclic amenable, then so is $\mathcal{A}$.
\end{theorem}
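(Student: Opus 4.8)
The plan is to lift the given cyclic derivation to $\mathcal{A}^{**}$, restrict it to the topological center where approximate cyclic amenability is assumed, and then project the resulting approximating net back down to $\mathcal{A}^{*}$.

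First I would take an arbitrary cyclic derivation $D:\mathcal{A}\to\mathcal{A}^{*}$, which is weakly compact by hypothesis, and pass to its second transpose $D^{**}:\mathcal{A}^{**}\to\mathcal{A}^{***}$. Exactly as in the proof of Theorem \ref{t11}(ii), weak compactness yields $D^{**}(\mathcal{A}^{**})\subseteq\mathcal{A}^{*}$ and, by \cite[Corollary 7.2(i)]{dal4}, $D^{**}$ is a derivation on $(\mathcal{A}^{**},\square)$. For cyclicity I would reuse the iterated-limit computation of Theorem \ref{t11}: writing $a^{**}=\omega^{*}\text{-}\lim_{\alpha}a_{\alpha}$ and $b^{**}=\omega^{*}\text{-}\lim_{\beta}b_{\beta}$ with bounded nets in $\mathcal{A}$, one has $\langle D^{**}(a^{**}),b^{**}\rangle=\lim_{\alpha}\lim_{\beta}\langle Da_{\alpha},b_{\beta}\rangle$ and $\langle D^{**}(b^{**}),a^{**}\rangle=\lim_{\beta}\lim_{\alpha}\langle Db_{\beta},a_{\alpha}\rangle$. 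Since weak compactness of $D$ is precisely Arens regularity of the bounded bilinear form $(a,b)\mapsto\langle Da,b\rangle$, the two iterated limits agree, and combining this with the cyclicity identity $\langle Da_{\alpha},b_{\beta}\rangle+\langle Db_{\beta},a_{\alpha}\rangle=0$ shows that $D^{**}$ is cyclic on all of $\mathcal{A}^{**}$.

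Next I would restrict to $Z:=Z(\mathcal{A}^{**})$. Because $Z$ is a closed subalgebra of $(\mathcal{A}^{**},\square)$ and the canonical image of $\mathcal{A}$ lies in $Z$, the restriction $D^{**}|_{Z}$ is a derivation of $Z$ into the $Z$-bimodule $\mathcal{A}^{***}$. Composing with the restriction map $r:\mathcal{A}^{***}\to Z^{*}$ (the adjoint of the inclusion $Z\hookrightarrow\mathcal{A}^{**}$), which is a $Z$-bimodule homomorphism precisely because $Z$ is a subalgebra of $(\mathcal{A}^{**},\square)$, produces a cyclic continuous derivation $\widehat{D}:=r\circ D^{**}|_{Z}:Z\to Z^{*}$; here cyclicity is inherited from $D^{**}$ since $\langle\widehat{D}(z),w\rangle=\langle D^{**}(z),w\rangle$ for all $z,w\in Z$. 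Approximate cyclic amenability of $Z$ then furnishes a net $(f_{\gamma})\subseteq Z^{*}$ with $\widehat{D}(z)=\lim_{\gamma}(z\cdot f_{\gamma}-f_{\gamma}\cdot z)$ for $z\in Z$.

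Finally I would push this back to $\mathcal{A}$. Let $q:Z^{*}\to\mathcal{A}^{*}$ be the restriction map dual to $\mathcal{A}\hookrightarrow Z$; a direct check using $b\square a=ba$ for $a,b\in\mathcal{A}$ shows that $q$ is an $\mathcal{A}$-bimodule homomorphism, and moreover $q(\widehat{D}(a))=D(a)$ for $a\in\mathcal{A}$ because $D^{**}$ extends $D$. Applying $q$ to the approximation above, evaluated on $a\in\mathcal{A}\subseteq Z$, yields $D(a)=\lim_{\gamma}\bigl(a\cdot q(f_{\gamma})-q(f_{\gamma})\cdot a\bigr)$ with $q(f_{\gamma})\in\mathcal{A}^{*}$, so $D$ is approximately inner and $\mathcal{A}$ is approximately cyclic amenable. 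I expect the main obstacle to be the bookkeeping that the two restriction maps $r$ and $q$ intertwine the respective bimodule actions—so that both the derivation identity and approximate innerness genuinely survive passage to $Z^{*}$ and back—together with the justification that weak compactness of $D$ legitimizes the interchange of iterated limits underlying the cyclicity of $D^{**}$.
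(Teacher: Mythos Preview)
Your overall strategy---lift $D$ to $D^{**}$, restrict to $Z=Z(\mathcal{A}^{**})$, apply approximate cyclic amenability there, and project back via restriction maps---is exactly the paper's. Your maps $r$ and $q$ are the paper's $J^{*}$ and ``restriction to $\mathcal{A}$'', and your $\widehat{D}$ is the paper's $\widetilde{D}=J^{*}\circ D^{**}\circ J$.

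There is, however, a genuine gap in one step. You invoke \cite[Corollary~7.2(i)]{dal4} ``exactly as in the proof of Theorem~\ref{t11}(ii)'' to conclude that $D^{**}$ is a derivation on all of $(\mathcal{A}^{**},\square)$. But Theorem~\ref{t11}(ii) carries the standing hypothesis that $\mathcal{A}$ is Arens regular, and that is precisely what makes the citation applicable; in the present theorem Arens regularity is \emph{not} assumed, and weak compactness of $D$ (equivalently $D^{**}(\mathcal{A}^{**})\subseteq\mathcal{A}^{*}$) does not by itself force $D^{**}$ to satisfy the Leibniz rule for $\square$ on arbitrary pairs in $\mathcal{A}^{**}$. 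The paper deliberately avoids this claim: it proves the derivation identity for $\widetilde{D}$ \emph{directly on} $Z$, and the decisive ingredient is the defining property of the topological center---that $a^{**}\mapsto b^{**}\square a^{**}$ is $\omega^{*}$--$\omega^{*}$ continuous for each $b^{**}\in Z$---which is what allows the inner limit to be passed through to obtain $\lim_{\alpha}\lim_{\beta}a_{\alpha}\cdot J^{*}\circ D(b_{\beta})=a^{**}\cdot\widetilde{D}(b^{**})$. Once you replace your global assertion about $D^{**}$ by this local computation on $Z$, the remainder of your argument (cyclicity via the iterated-limit interchange permitted by weak compactness, and the bimodule compatibility of $r$ and $q$) coincides with the paper's proof.
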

\begin{proof}  For simplicity, we put $\mathcal{B}=Z(\mathcal{A}^{**})$. Let $D:\mathcal{A}\longrightarrow
\mathcal{A}^{*}$ be a cyclic derivation and
$J:\mathcal{B}\longrightarrow \mathcal{A}^{**}$ is the inclusion
map. Let us consider the map $\widetilde{D}:=J^{*}\circ D^{**}\circ J:\mathcal{B}\longrightarrow\mathcal{B}^{*}$.
Assume that $a^{**}, b^{**}\in \mathcal{B}$ and choose two
bounded nets $(a_{\alpha}), (b_{\beta})\subseteq \mathcal A$ such that
$a^{**}=\omega^{*}-\lim_{\alpha}a_{\alpha}$ and
$b^{**}=\omega^{*}-\lim_{\beta}b_{\beta}$. Then

\begin{align*}
\widetilde{D}(a^{**}b^{**})
&=\lim_{\alpha}\lim_{\beta}\widetilde{D}(a_{\alpha}b_{\beta})\\
&=\lim_{\alpha}\lim_{\beta}J^{*}(D(a_{\alpha})\cdot b_{\beta}+a_{\alpha}\cdot D(b_{\beta}))\\
&=\lim_{\alpha}\lim_{\beta}(J^{*}\circ D(a_{\alpha})\cdot
b_{\beta}+a_{\alpha}\cdot J^{*}\circ D(b_{\beta})).
\end{align*}

Note that in the last equality we used the fact that $J^{*}$ is a
$\mathcal{B}$-module homomorphism. Obviously,
$\lim_{\alpha}\lim_{\beta}J^{*}\circ D(a_{\alpha})\cdot
b_{\beta}=\widetilde{D}(a^{**})\cdot b^{**}$. Since the map
$a^{**}\mapsto b^{**}a^{**}$, $\omega^{*}-\omega^{*}$-continuous,
$\lim_{\alpha}\lim_{\beta}a_{\alpha}\cdot J^{*}\circ
D(b_{\beta})=a^{**}\cdot\widetilde{D}(b^{**})$ and as a result
$\widetilde{D}$ is a derivation. Moreover, $\langle
\widetilde{D}(a^{**}),
b^{**}\rangle=\lim_{\alpha}\lim_{\beta}\langle
D(a_{\alpha}),b_{\beta}\rangle$. By assumption since D is weakly
compact we have $D^{**}(\mathcal{A}^{**})\subseteq
\mathcal{A}^{*}$ and so $\langle
\widetilde{D}(b^{**}),a^{**}\rangle=\lim_{\alpha}\lim_{\beta}\langle
D(b_{\beta}),a_{\alpha}\rangle$. Hence
\begin{align*}
0&=\lim_{\alpha}\lim_{\beta}(\langle D(a_{\alpha}),b_{\beta}\rangle+\langle D(b_{\beta}),a_{\alpha}\rangle)\\
&=\langle \widetilde{D}(a^{**}),b^{**}\rangle+\langle
\widetilde{D}(b^{**}),a^{**}\rangle.
\end{align*}

Therefore, $\widetilde{D}$ is a cyclic derivation and since $\mathcal{B}$ is approximately cyclic amenable then there
exists a net $(b_{\gamma}^{*})\subseteq \mathcal{B}^{*}$ such that
$$\widetilde{D}(b)=\lim_{\gamma}(b\cdot b_{\gamma}^{*}-b_{\gamma}^{*}\cdot b)\quad (b\in \mathcal{B}).$$

Now, set $a_{\gamma}^{*}=b_{\gamma}^{*}|_{\mathcal{A}}$. By
considering the net $(a_{\gamma}^{*})\subseteq \mathcal{A}^{*}$
and above relation we have
$$D(a)=\lim_{\gamma}(a\cdot a_{\gamma}^{*}-a_{\gamma}^{*}\cdot a)\quad (a\in \mathcal{A}).$$

Therefore $\mathcal{A}$ is approximately cyclic amenable.
\end{proof}

\section*{Acknowledgement}
The authors sincerely thank the anonymous reviewer for his
careful reading, constructive comments and fruitful suggestions
to improve the quality of the paper. The first author was supported by Islamic Azad
University, Karaj Branch. He would like to
thanks this University for their kind support.

\end{document}